\flushbottom \setlength{\textwidth}{150mm}
\newtheorem{theorem}{Theorem}[section]
\newtheorem{corollary}[theorem]{Corollary}
\newtheorem{lemma}[theorem]{Lemma}
\newtheorem{proposition}[theorem]{Proposition}
\theoremstyle{definition}
\newtheorem{definition}[theorem]{Definition}
\newtheorem{remark}[theorem]{Remark}
\newtheorem{example}[theorem]{Example}
\newcommand{\Ker}{\operatorname{ker}}
\newcommand\iso{\kern.35em{\raise3pt\hbox{$\sim$}\kern-1.1em\to}\kern.3em}
\newcommand{\eq}[1][r]
   {\ar@<-3pt>@{-}[#1]
    \ar@<-1pt>@{}[#1]|<{}="gauche"
    \ar@<+0pt>@{}[#1]|-{}="milieu"
    \ar@<+1pt>@{}[#1]|>{}="droite"
    \ar@/^2pt/@{-}"gauche";"milieu"
    \ar@/_2pt/@{-}"milieu";"droite"}
\title{Linear representations of convolutional codes over rings
\thanks{Partially supported by INCIBE. Ministerio de Industria, Spain.
 }
\author{Miguel V. Carriegos\footnote{RIASC, Universidad de Le\'on, SPAIN, mail to: {\texttt miguel.carriegos@unileon.es}}\qquad Noem\'{i} DeCastro\footnote{Departamento de Matem\'aticas, Universidad de Le\'on, SPAIN, mail to: {\texttt ncasg@unileon.es}} \qquad Ángel Luis Muñoz Castañeda\footnote{Institut f\"ur Mathematik, Freie Universit\"at, Berlin, GERMANY, mail to: {\texttt angel@math.fu-berlin.de}}}
}
\begin{document}

\maketitle

\begin{abstract}
In this paper we extend the relation between convolutional codes and linear systems over finite fields to certain commutative rings through first order representations . We introduce the definition of \emph{rings with representations} as those for which these representations always exist, and we show that finite products of finite fields belong to this class. We develop the input/state/output representations for convolutional codes over these rings, and we show how to use them to construct observable convolutional codes as in the classical case.
\end{abstract}

{\textsl Keywords:} convolutional codes; linear systems; finite rings

{\textsl 2010 MSC: } 93B05, 93B07, 93B20; 13M99

\section{Introduction}
Convolutional codes are  error-correcting codes used to detect and correct sets of digital data. Convolutional codes over finite fields were introduced by Peter Elias in 1955 
and, in the current context, a considerable research in this field is developed by using algebraic, combinatorics, computer science, control theoretic or algebro-geometric tools among others (see \cite{fragouli}, \cite{GLROS}, \cite{HUT}, \cite{MC}, \cite{ravi2} or \cite{zerz}).

The first approach to convolutional codes over rings was given by Massey and Mittelholzer in \cite{Massey, Masseymit}. There is a considerable body of literature about convolutional codes over rings where generator matrices, minimal encoders and their properties have been studied (see \cite{Fagnani, JH}). Moreover, trellis representations and properties of convolutional codes over $\mathbb{Z}/p^{r}\mathbb{Z}$ are developed in \cite{PINTO,PINTOKUIJPOLDER}.

We are interested in the approach to convolutional codes over finite fields by linear systems. This relation is given in terms of first order representations of the code, that is, triples of matrices $(K,L,M)$  that allow us to obtain another set of matrices $(A,B,C,D)$ that forms a reachable (controllable) input/state/output (I/S/O) representation  of the convolutional code, where the inputs and outputs of a system are part of the codeword (the main results can be found in \cite{KUIJ}, \cite{BCH}, \cite{RSY}, and \cite{York}).  Moreover, in  \cite{isabelhijo} it is shown that the decoder process of the code is given by the output controllability matrix (the matrix which solves the associated linear dynamical system). 

The natural question is whether we can generalize the above duality to certain commutative rings with identity. Within this goal, we introduce the definition of \emph{rings with representations} generalizing the above described relation between codes and systems to this class of rings and their finite products.

This paper is organized as follows. In section 2 we give some algebraic preliminaries that are needed in the rest of the paper. In section 3 we define the concept of family of convolutional codes over a ring $R$, and we develop for them the basic theory of first order representations generalizing the classical case. We then define the class of rings with representations and we show that finite products of finite fields belong to this class. In section 4 we show the existence of I/S/O representations for families of convolutional codes over finite products of finite fields and their reachability properties. In section 5 we use the above results to construct observable families of convolutional codes from linear system point of view. Finally we give our conclusions and further research.


\section{Preliminaries}

We first give a brief overview of the theory of convolutional codes over a finite field. Finally we state the basic algebraic preliminaries that will be used in the rest of the paper.

\subsection{Convolutional Codes over Finite Fields} 

Let us start by recalling some basic definitions and known results regarding convolutional codes and their representations, the reader can see \cite{BCH}, \cite{RSY} and  \cite{York} as main references on the topic.

Let $\mathbb{F}$ be a finite field and $k\leq n\in\mathbb{N}$. A $(n,k)$ convolutional code over $\mathbb{F}$ is a rank $k$ submodule $\mathcal{C}\subset \mathbb{F}[z]^{n}$. Any matrix $G(z)\in Mat_{n\times l}(\mathbb{F}[z])$, with $l\geq n$, whose columns generate $\mathcal{C}$ is called a generator matrix of $\mathcal{C}$. A generator matrix $G(z)$ of $\mathcal{C}$ of size $n\times k$ is called an encoder of $\mathcal{C}$. Note that any encoder is necessarily injective.

Let $G(z)$ be an encoder for a $(n,k)$ convolutional code $\mathcal{C}$ and denote $\underline{g}_{j}(z)=(g_{i,j}(z))_{i=1,\hdots n}$ the jth column of $G(z)$. The column degree of the encoder is defined as the maximum degree of its coordinates, $\nu_{j}:={max}\{deg(g_{i,j}(z))| i=1,\hdots n\}$. Reordering the columns if were necessary, we may assume that $\nu_{1}\geq \hdots \geq \nu_{k}$. The complexity, or degree, of the encoder is then defined as $\delta(G(z)):=\sum_{j=1}^{k}\nu_{j}$ while the memory is defined as the maximum column degree, i.e. $\nu_{1}$. Note that a memoryless convolutional code is a block code. The complexity, or the degree, of the code $\mathcal{C}$, $\delta(\mathcal{C})$, is the highest degree of the full size minors of any encoder $G(z)$ of $\mathcal{C}$. An encoder $G(z)$ is called minimal if $\delta(G(z))=\delta(\mathcal{C})$.

Let $\mathcal{C}$ be a $(n,k)$ convolutional code over $\mathbb{F}$ with degree $\delta$. A first order representation of $\mathcal{C}$ is a triple of matrices $(K,L,M)$ with $K,L\in Mat_{\delta+n-k\times\delta}(\mathbb{F})$ and $M\in Mat_{n-k+\delta\times n}(\mathbb{F})$ such that 
$$\mathcal{C}=\{v(z)\in\mathbb{F}[z]^{n} \ | \ \exists x(z)\in\mathbb{F}[z]^{\delta} \textit{ such that }zKx(z)+Lx(z)+Mv(z)=0\}$$
Moreover, if the representation satisfies the following conditions,
\begin{enumerate}
\item $K$ has column full size rank,
\item $(K,M)$ has row full size rank,
\item $rk(z_{0}K+L,M)=\delta+n-k$ for all $z_{0}\in\overline{\mathbb{F}}$, being $\overline{\mathbb{F}}$ the algebraic clousure
\end{enumerate}
then it is called minimal. And, on the other hand, two first order representations $(K,L,M)$ and $(K',L',M')$ are equivalent if there exist (unique) invertible matrices $T,S$ of the adequate sizes such that $(K',L',M')=(TKS^{-1},TLS^{-1},TM)$. The main theorem regarding first order representations is
\begin{theorem}\emph{(\cite[Th. 5.1.1, Th. 5.1.4]{York}, \cite[Th. 3.1, Th. 3.4]{RSY})}\label{york}
Every convolutional code $\mathcal{C}$ admits a unique (up to equivalence) minimal first order representation $(K,L,M)$.
\end{theorem}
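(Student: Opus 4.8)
The plan is to establish existence and uniqueness separately, following the module-theoretic/geometric circle of ideas behind first order (pencil) representations of behaviors. For existence, I would start from a minimal encoder $G(z)$ of $\mathcal{C}$ with column degrees $\nu_1\geq\cdots\geq\nu_k$, so $\sum\nu_j=\delta$. Writing each column of $G(z)$ out in powers of $z$, one introduces a state vector $x(z)\in\mathbb{F}[z]^{\delta}$ collecting the ``delayed'' partial sums $z^{i}$-truncations of the message, which realizes the polynomial relation $v(z)=G(z)u(z)$ as a first order (linear in $z$) relation $zKx(z)+Lx(z)+Mv(z)=0$; the sizes of $K,L,M$ come out exactly as $(\delta+n-k)\times\delta$ and $(\delta+n-k)\times n$ because $\delta+n-k$ is the number of scalar syzygies needed (this is the standard ``shift realization''/companion-type construction). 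One then checks conditions (1)--(3): column full rank of $K$ and row full rank of $(K\ M)$ follow from the way the state is built with no redundancy, and the pointwise rank condition $\rk(z_0K+L,\ M)=\delta+n-k$ for all $z_0\in\overline{\mathbb{F}}$ is exactly the translation of $G(z)$ being \emph{minimal} (equivalently, having no finite zeros / being right prime up to the degree constraint) — that is the crux of the existence half.

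For uniqueness up to the equivalence $(K',L',M')=(TKS^{-1},TLS^{-1},TM)$, I would argue as follows. Given two minimal representations, both present $\mathcal{C}$ as the image-behavior of the same code, so one first extracts from conditions (1)--(3) that the associated pencil $zK+L$ (on the state space) is ``regular'' enough that the state trajectory $x(z)$ is \emph{uniquely determined} by $v(z)\in\mathcal{C}$: condition (1) kills polynomial kernel freedom, condition (3) at $z_0=\infty$ (i.e.\ the leading-coefficient rank, handled by homogenizing or by the $K$-rank hypothesis) plus (3) at finite $z_0$ forces injectivity of the map $v\mapsto x$. Then the identification of the two state spaces gives the invertible $S$, and matching the syzygy rows gives the invertible $T$; the compatibility $zK'x'+L'x'+M'v=0$ against $zKx+Lx+Mv=0$ pins down $(TKS^{-1},TLS^{-1},TM)=(K',L',M')$, and $T,S$ are unique because the relevant maps are surjective resp.\ injective (full rank), so no nontrivial $(T,S)$ fixes a given representation.

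The main obstacle I anticipate is the existence step, specifically producing the state realization with the \emph{exact} dimension $\delta$ and simultaneously verifying the three minimality conditions — one must be careful that the naive shift realization from an arbitrary encoder has state dimension $\delta(G)$, so the reduction to a \emph{minimal} encoder (available in the finite field case) is essential, and the finite-zero rank condition (3) genuinely uses right-primeness of a minimal encoder. The uniqueness step is then largely linear algebra once the ``$v$ determines $x$'' injectivity is in hand. Since the statement is quoted verbatim from \cite{York} and \cite{RSY}, I would in the actual write-up simply cite those for the full proof; the above is the route one would reconstruct if proving it from scratch.
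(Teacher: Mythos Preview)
The paper does not prove this theorem at all: it is stated with a citation to \cite[Th.~5.1.1, Th.~5.1.4]{York} and \cite[Th.~3.1, Th.~3.4]{RSY} and used as a black box, so there is no in-paper argument to compare your sketch against. Your final remark that ``in the actual write-up [you] would simply cite those'' is exactly what the paper does.

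That said, your sketch is the standard route taken in those references (shift/companion realization from a minimal encoder for existence; state-trajectory identification for uniqueness), and is essentially correct. One point to tighten: you identify condition~(3), the full row rank of $(z_0K+L,\ M)$ for every $z_0\in\overline{\mathbb{F}}$, with ``right primeness'' of $G(z)$. That is not quite right. Right primeness of an encoder is equivalent to \emph{observability} of the code, and non-observable codes certainly exist yet still admit minimal first order representations. Condition~(3) is instead a property of the \emph{representation} (no redundant equations / the kernel description is left prime), and it follows directly from the structure of the shift realization built from any column-reduced minimal encoder, without any observability hypothesis on $\mathcal{C}$. If your existence argument genuinely relied on $G(z)$ being right prime, it would only cover observable codes, which would be a gap; fortunately the shift realization verifies~(3) automatically once the encoder is merely column-reduced with $\delta(G)=\delta(\mathcal{C})$.
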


Let $\mathcal{C}\in\mathbb{F}[z]^{n}$ be a $(n,k)$ convolutional code  of degree $\delta$ and $(K,L,M)$ a minimal first order representation. We know that the matrix $(K,L,M)$ has full rank so there is an invertible matrix $W$ of rank $\delta+n-k$ such that, reordering the code words if were necessary, it holds
$$W(K,L,M)=(\mathcal{K},\mathcal{L},\mathcal{M})$$
where
\begin{equation}\label{iso}
\mathcal{K}=\left(\begin{array}{cc}
-Id_{\delta}\\
O \\
\end{array}
\right), \mathcal{L}=\left(\begin{array}{cc}
A \\
C \\
\end{array}
\right) \textit{ and }  \mathcal{M}=\left(\begin{array}{cc}
O & B  \\
-Id_{(n-k)} & D\\
\end{array}
\right)
\end{equation}
The matrices $A, B, C$ and $D$ over $R$ obtained from (\ref{iso}) form an I/S/O representation of $\mathcal{C}$, that is, they define a linear system with state-space realization given by
\begin{equation}
\label{dinamic2}
\left\{\begin{array}{l} \overrightarrow{x}_{t+1}=A\overrightarrow{x}_t+B\overrightarrow{u}_t\\ \overrightarrow{y}_t=C\overrightarrow{x}_t+D\overrightarrow{u}_t\\
\overrightarrow{v}_t=\left(\begin{array}{c}\overrightarrow{y}_t\\\overrightarrow{u}_t\end{array}\right)\: , \:
x_0=0,\: \exists \gamma: \overrightarrow{x}_{\gamma+1}=0.
\end{array}\right.
\end{equation}
 where $\overrightarrow{x}(t)$ is the $n$-state vector, $\overrightarrow{y}(t)$ the $p$-vector output and $\overrightarrow{u}(t)$ the $m$-vector control. We also give an initial state $x_{t_{0}}=x_{0}$ in time $t_{0}$. 
\begin{remark}
Note that this linear system is reachable (by minimality conditions of first order representations). Conversely if such a reachable linear system is also observable then the associated convolutional code is observable (see \cite{RSY, BCH}).
\end{remark}


\subsection{Kernel of a Pair of Morphisms}

 Let us recall some definitions and properties from \cite{nuestroarxiv}. These allow us to study systematically $R$-modules defined in the same manner as in property (4) stated above.

Let $R$ be a commutative ring (with unit), and let $M_{1},M_{2},N$ be $R$-modules. Let $f_{i}:M_{i}\rightarrow N$ be two $R$-linear maps. We define the kernel $Ker(f_{1}| f_{2})$ as
\begin{equation*}
Ker(f_{1}|f_{2}):=\{m_{2}\in M_{2}|\exists m_{1}\in M_{1}:f_{1}(m_{1})+f_{2}(m_{2})=0\}
\end{equation*}
There are three different ways to present the kernel of two $R$-linear maps,
\begin{enumerate}
\item $Ker(f_{1}|f_{2})= f_{2}^{-1}(Im(f_{1}))$
\item $Ker(f_{1}|f_{2})\simeq Coker(Ker(f_{1})\hookrightarrow Ker(f_{1},f_{2}))$
\item $Ker(f_{1}|f_{2})= Ker(p_{1}\circ f_{2}:M_{2}\rightarrow N/Im(f_{1}))$
\end{enumerate}
(being the projection $p_1:N\rightarrow N/Im(f_1)$) from which the main properties are derived. For instance, $Ker(f_{1}|f_{2})$ behaves well with respect to flat base change. As direct consequence, we have the following particular situation,
\begin{corollary}\emph{(\cite[Corollary 5.5]{nuestroarxiv})}\label{restriction}
Consider matrices $A(z), B(z)$ of adequate sizes, $p\times q_1$ and $p\times q_2$ respectively, and entries in $R[z]$ where $R=R_1\times\cdots\times R_t$ is a product ring with structural idempotents $e_i\in R$. Then one has
$$
Ker(A(z)\mid B(z))=\{u(z)\mid\exists x(z):A(z)x(z)+B(z)u(z)=0\}=
$$
$$
=e_1Ker(\pi_1(A)(z)\mid\pi_1(B)(z))+\cdots+e_t Ker(\pi_t(A)(z)\mid\pi_t(B)(z))
$$
being $\pi_i$ the ith projection of $R$ onto $R_{i}$.
\end{corollary}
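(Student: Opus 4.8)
The plan is to reduce the statement to the already-developed general theory of $Ker(f_1\mid f_2)$ for a single pair of morphisms, and then exploit the way modules, maps and kernels decompose over a finite product of rings. Concretely, set $p,q_1,q_2$ as in the statement and regard $A(z)$ and $B(z)$ as the matrices of $R[z]$-linear maps $f_1\colon R[z]^{q_1}\to R[z]^{p}$ and $f_2\colon R[z]^{q_2}\to R[z]^{p}$; by definition the left-hand side is exactly $Ker(f_1\mid f_2)$ as a submodule of $R[z]^{q_2}$. So the content of the corollary is that this submodule decomposes as the (internal direct) sum $\bigoplus_i e_i\,Ker(\pi_i(f_1)\mid\pi_i(f_2))$, where $\pi_i(f_j)$ is the map over $R_i[z]$ obtained by applying the $i$th projection entrywise.

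The first step is the purely algebraic observation that $R[z]=R_1[z]\times\cdots\times R_t[z]$ with the same structural idempotents $e_i$, and that every $R[z]$-module $P$ splits canonically as $P=\bigoplus_i e_iP$ with $e_iP$ an $R_i[z]$-module; in particular $R[z]^{q}=\bigoplus_i e_iR[z]^{q}\cong\bigoplus_i R_i[z]^{q}$, and any $R[z]$-linear map between free modules splits as the product of its components $\pi_i(f)$. Multiplication by $e_i$ is then nothing but the composition of the projection onto the $i$th factor followed by the inclusion, and it is $R[z]$-linear and idempotent. The key point I would emphasise is that the projection $R[z]\to R_i[z]$ is a flat (indeed, split surjective) ring map, so by the base-change property of $Ker(f_1\mid f_2)$ recalled just before the corollary — the assertion that $Ker(f_1\mid f_2)$ behaves well with respect to flat base change — one has a natural isomorphism $R_i[z]\otimes_{R[z]}Ker(f_1\mid f_2)\cong Ker(\pi_i(f_1)\mid\pi_i(f_2))$. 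Applying this for all $i$ and summing gives $Ker(f_1\mid f_2)\cong\bigoplus_i Ker(\pi_i(f_1)\mid\pi_i(f_2))$, and under the canonical identification $e_iR[z]^{q_2}\cong R_i[z]^{q_2}$ this isomorphism is precisely the internal decomposition $Ker(f_1\mid f_2)=\sum_i e_i\,Ker(\pi_i(f_1)\mid\pi_i(f_2))$ claimed in the statement.

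Alternatively — and this is the route I would actually write out, since it avoids invoking base change at the level of tensor products — one can argue directly and elementarily. For the inclusion $\supseteq$: if $u(z)\in Ker(\pi_i(A)(z)\mid\pi_i(B)(z))$, pick $x(z)$ over $R_i[z]$ with $\pi_i(A)x+\pi_i(B)u=0$; lifting $x$ and $u$ arbitrarily to $R[z]$ and multiplying the relation $A\tilde x+B\tilde u$ by $e_i$ kills all other components, so $A(e_i\tilde x)+B(e_i\tilde u)=0$, whence $e_iu\in Ker(A(z)\mid B(z))$; linearity of the kernel gives $\sum_i e_iu_i$ in the kernel. For the inclusion $\subseteq$: given $u(z)\in Ker(A(z)\mid B(z))$ with witness $x(z)$, apply $\pi_i$ to $A(z)x(z)+B(z)u(z)=0$ to get $\pi_i(A)(z)\pi_i(x)(z)+\pi_i(B)(z)\pi_i(u)(z)=0$, so $\pi_i(u)(z)\in Ker(\pi_i(A)(z)\mid\pi_i(B)(z))$; then $u(z)=\sum_i e_i u(z)$ and $e_iu(z)$ is the image of $\pi_i(u)(z)$ under the inclusion, giving the reverse containment. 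The sum is direct because the $e_i$ are orthogonal idempotents summing to $1$.

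I do not expect a genuine obstacle here: this is essentially a bookkeeping lemma once the componentwise behaviour of matrices and polynomials over a product ring is set up, and the only thing to be careful about is the compatibility between the abstract statement "$Ker(f_1\mid f_2)$ is compatible with flat base change" from \cite{nuestroarxiv} and the explicit idempotent decomposition — i.e. checking that the lifted witness $x(z)$ can indeed be chosen compatibly, which the orthogonality of the $e_i$ handles automatically. The mild subtlety worth a sentence in the write-up is that the witnesses $x(z)$ live in $R[z]^{q_1}$ but need only be produced factor by factor, so one must note that a solution over each $R_i[z]$ assembles (via the $e_i$) into a solution over $R[z]$ for the corresponding idempotent component of $u(z)$, and not attempt to find a single global $x(z)$ first.
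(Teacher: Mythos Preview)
Your proposal is correct. The paper does not actually give a proof of this corollary: it is quoted from \cite{nuestroarxiv} and introduced only with the sentence ``$Ker(f_{1}|f_{2})$ behaves well with respect to flat base change. As direct consequence, we have the following particular situation,'' so the paper's implicit argument is precisely your first route --- apply the flat base-change property of $Ker(f_1\mid f_2)$ to the split projections $R[z]\to R_i[z]$ and reassemble via the idempotents. Your elementary second argument (checking both inclusions by hand with the orthogonal idempotents) is a self-contained alternative that the paper does not spell out; it buys independence from the cited base-change result at the cost of a few more lines, but both approaches are equally valid here.
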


\subsection{Rank of a Matrix with Coefficients in a Ring }

We assume that any commutative ring is a ring with unit and any morphism of rings $\phi:R\rightarrow S$ maps the identity to the identity.

Let $R$ be a commutative ring and let $A\in Mat_{n\times m}(R)$ be a matrix with coefficients in $R$. For any positive integer $0<i\leq r:=min\{n,m\}$, we define 
$$\mathcal{U}_{i}(A):= \textit{ideal generated by the }i\times i\textit{ minors of }A$$
These ideals form a chain 
$$(0)\subseteq \mathcal{U}_{r}(A)\subseteq \mathcal{U}_{r-1}(A)\subseteq\hdots\subseteq \mathcal{U}_{1}(A)\subseteq R$$
The main property of the ideals $\mathcal{U}_{i}(A)$ is that they are stable under base change, i.e. if $g:R\rightarrow S$ is a morphism of rings  then 
$$\mathcal{U}_{i}(A\otimes 1)=\mathcal{U}_{i}(A)\cdot S$$
As a trivial consequence we have
\begin{lemma}
Let $g:R\rightarrow S$ be a morphism of rings and let $A\in Mat_{n\times m}(R)$ be a matrix. 
\begin{enumerate}
\item[i)] If $\mathcal{U}_{i}(A)=R$ then $\mathcal{U}_{i}(A\otimes 1)=S$. 
\item[ii)] If $n=m$, then $det(A\otimes 1)$ is invertible in $S$ if $det(A)$ is invertible in $R$.
\end{enumerate}
\begin{proof}
(i) If $\mathcal{U}_{i}(A)=R$ then $1_{S}\in g(\mathcal{U}_{i}(A))$, so $\mathcal{U}_{i}(A\otimes 1)=S$ holds. (ii) The second part follows  from the fact that $A\otimes 1=g^{*}(A)$, and that $g$ maps invertible elements to invertible elements.
\end{proof}
\end{lemma}
Consider the corresponding chain of annihilators 
$$(0)=Ann_{R}(R)\subseteq Ann_{R}(\mathcal{U}_{1}(A))\subset \hdots \subseteq Ann_{R}(\mathcal{U}_{r}(A))\subseteq R$$
We define
\begin{definition}
The (determinantal) rank of the matrix $A$ is defined as 
$$rk(A):=max\{i \ | \ Ann_{R}(\mathcal{U}_{i}(A))= 0\}$$
\end{definition}
As in the classical case, if $A\in Mat_{n\times n}(R)$ then the equation $AX=0$ has no non zero solutions if and only if $rk(A)=n$ (McCoy's Theorem). Despite  that, the condition $rk(A)=n$ does not mean $A$ is invertible. In fact, it can be easily shown that $rk(A)=n$ if and only if $det(A)$ is not a zero divisor. Therefore we have,

\begin{lemma}\emph{(\cite[Theorem 2.1]{BBvV})}\label{surin}
Let $A\in Mat_{n\times m}(R)$ be a matrix with $n\geq m$. Then $A\textit{ is injective }$ if and only if $rk(A)=m$. Let $A\in Mat_{n\times m}(R)$ be a matrix with $n\leq m$. Then $A$ is surjective if and only if $\mathcal{U}_{n}(A)=R$.
\end{lemma}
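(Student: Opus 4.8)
The plan is to prove the two assertions separately, reducing each to elementary determinantal manipulations: the adjugate identity, a cofactor expansion, the Cauchy--Binet formula, and the fact that an epimorphism onto a free module splits.

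For the first assertion, the implication $rk(A)=m\Rightarrow A$ injective is direct: if $Ax=0$ then for every choice $I$ of $m$ rows the square submatrix $A_{I}$ also kills $x$, so multiplying by $\operatorname{adj}(A_{I})$ gives $\det(A_{I})\,x_{j}=0$ for all $j$; letting $I$ range over all $m$-subsets of rows shows that each coordinate $x_{j}$ annihilates $\mathcal{U}_{m}(A)$, hence $x_{j}\in Ann_{R}(\mathcal{U}_{m}(A))=0$ and $A$ is injective. The converse is the rectangular version of McCoy's theorem, and this is where the real work lies; I would establish its contrapositive by induction on $m$: if a nonzero $c\in R$ annihilates every $m\times m$ minor of $A$, then $\Ker(A)\neq 0$. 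For $m=1$ the scalar $c$, viewed as an element of $R^{1}$, already lies in $\Ker(A)$. For $m>1$, either some nonzero $c'\in R$ annihilates all $(m-1)\times(m-1)$ minors of $A$, in which case the same holds for the submatrix $A'$ formed by the first $m-1$ columns, and the inductive hypothesis (applicable since $n\geq m-1$) produces a nonzero kernel vector of $A'$, which extends by a single zero coordinate to a nonzero kernel vector of $A$; or no such $c'$ exists, and then, after permuting rows and columns, we may fix an $(m-1)\times(m-1)$ minor $d$ built from the first $m-1$ rows and first $m-1$ columns with $cd\neq 0$. Defining $x\in R^{m}$ so that $x_{j}$ is the signed $(m-1)\times(m-1)$ minor on rows $1,\dots,m-1$ and on the columns distinct from $j$, a cofactor expansion identifies the $i$-th entry of $Ax$ with the $m\times m$ determinant on rows $1,\dots,m-1,i$; this vanishes for $i\leq m-1$ (a repeated row) and is an $m\times m$ minor of $A$ for $i\geq m$. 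Since $n\geq m$, rows with $i\geq m$ exist, so $c\cdot Ax=0$; thus $cx\in\Ker(A)$, and $cx\neq 0$ because its $m$-th coordinate is $cd$.

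For the second assertion, suppose first $\mathcal{U}_{n}(A)=R$ and write $1=\sum_{J}r_{J}M_{J}$ with $M_{J}=\det(A_{J})$ the maximal minor on an $n$-element column set $J$. The adjugate identity $A_{J}\operatorname{adj}(A_{J})=M_{J}\,Id_{n}$ shows $M_{J}\,e_{k}\in\Img(A)$ for each standard basis vector $e_{k}$ of $R^{n}$ (it equals $A_{J}$ applied to $\operatorname{adj}(A_{J})\,e_{k}$, padded by zeros on the remaining coordinates); summing with the coefficients $r_{J}$ yields $e_{k}\in\Img(A)$, so $A$ is surjective. Conversely, if $A\colon R^{m}\to R^{n}$ is surjective it splits, because $R^{n}$ is free, so there is $S\in Mat_{m\times n}(R)$ with $AS=Id_{n}$; expanding $\det(AS)$ by the Cauchy--Binet formula writes $1$ as a sum of products of a maximal minor of $A$ with a maximal minor of $S$, hence $1\in\mathcal{U}_{n}(A)$ and $\mathcal{U}_{n}(A)=R$.

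The main obstacle is the converse of the first assertion: it is precisely McCoy's theorem in the non-square setting, and the induction sketched above---especially the dichotomy according to whether the $(m-1)\times(m-1)$ minors admit a common nonzero annihilator---is the mechanism that handles it, with the hypothesis $n\geq m$ used exactly to secure the extra rows needed to realise the relevant determinants as maximal minors of $A$. Every other step is a routine computation with adjugates and the Cauchy--Binet expansion.
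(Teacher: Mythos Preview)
Your proof is correct. Note, however, that the paper does not supply its own argument for this lemma: it is quoted verbatim from \cite[Theorem 2.1]{BBvV} and left unproved. So there is no ``paper's proof'' to compare against; you have written out a self-contained justification where the authors chose to cite the literature.

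That said, your argument is exactly the standard one in this setting and matches what one finds in the cited source. The injectivity direction is the rectangular McCoy theorem, and your inductive dichotomy on whether the $(m-1)\times(m-1)$ minors have a nonzero common annihilator is the classical device (the constructed vector $x$ whose entries are signed cofactors of the top $(m-1)$ rows is precisely the one used in Brewer--Bunce--Van Vleck). For surjectivity, the adjugate trick plus splitting and Cauchy--Binet is again the textbook route. One small expository point: your closing remark that $n\geq m$ is ``used exactly to secure the extra rows'' slightly overstates its role---if $n<m$ then there are no $m\times m$ minors at all, $\mathcal{U}_{m}(A)=0$, and injectivity forces $R=0$; the hypothesis is really there so that the statement is nonvacuous rather than to make the cofactor argument go through. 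This does not affect correctness.
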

For the sake of clarity, we fix first of all some notation. For any prime ideal $\mathfrak{p}\in R$ we denote $k(\mathfrak{p})=R_{\mathfrak{p}}/\mathfrak{p}R_{\mathfrak{p}}$ the residue field and for any $R[z]$-module $M$ we denote $M(\mathfrak{p})$ the $k(\mathfrak{p})[z]$-module $M/\mathfrak{p}M=M\otimes_{R[z]}k(\mathfrak{p})[z]$.

Let $A\in Mat_{n\times m}(R)$  be a matrix with $n\leq m$ and $M=Coker(A)$ then, by Nakayama's Lemma, we have
$$M_{\mathfrak{p}}=0\Leftrightarrow M(\mathfrak{p})=0\Leftrightarrow rk(A(\mathfrak{p}))=n$$
for any prime ideal $\mathfrak{p}$ of $R$. Therefore,

\begin{lemma}\label{fullrank}
Let $A\in Mat_{n\times m}(R)$ be a matrix with $n\leq m$. Then $A\textit{ is surjective }$ if and only if $rk(A(\mathfrak{p}))=n, \ \forall \mathfrak{p}\in Spec(R)$.
\end{lemma}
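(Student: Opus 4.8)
The plan is to combine Nakayama's Lemma for finitely generated modules over a local ring with the determinantal-rank criterion for surjectivity of a matrix over a field, exactly as indicated by the chain of equivalences displayed just before the statement. First I would recall that $M=\coker(A)$ is a finitely generated $R$-module (it is a quotient of $R^{n}$), so for every prime $\mathfrak p$ the localization $M_{\mathfrak p}$ is a finitely generated $R_{\mathfrak p}$-module, and by Nakayama $M_{\mathfrak p}=0$ if and only if $M_{\mathfrak p}\otimes_{R_{\mathfrak p}} k(\mathfrak p)=M(\mathfrak p)=0$. Since cokernel commutes with the (right-exact) base change $-\otimes_{R}k(\mathfrak p)$, we have $M(\mathfrak p)=\coker\bigl(A(\mathfrak p)\bigr)$, where $A(\mathfrak p)\in Mat_{n\times m}(k(\mathfrak p))$.

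Next, $A(\mathfrak p)$ is a matrix over the field $k(\mathfrak p)$, so $\coker(A(\mathfrak p))=0$ is equivalent to $A(\mathfrak p)$ being surjective as a $k(\mathfrak p)$-linear map, which in turn (classical linear algebra, or Lemma~\ref{surin} applied over the field $k(\mathfrak p)$, using $n\le m$) is equivalent to $\rk(A(\mathfrak p))=n$. Stringing these equivalences together yields, for each $\mathfrak p\in\Spec(R)$,
$$
M_{\mathfrak p}=0 \iff \rk\bigl(A(\mathfrak p)\bigr)=n.
$$
Then I would invoke the standard fact that a finitely generated module $M$ is zero if and only if $M_{\mathfrak p}=0$ for all $\mathfrak p\in\Spec(R)$ (equivalently for all maximal ideals). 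Applying this, $\coker(A)=0$ — that is, $A$ is surjective — if and only if $\rk(A(\mathfrak p))=n$ for every prime $\mathfrak p$, which is the assertion.

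Since all the ingredients (Nakayama, right-exactness of tensor product, the local-to-global vanishing criterion, and the field case of the rank criterion) are standard and already essentially quoted in the text preceding the statement, there is no serious obstacle; the only point requiring a little care is the bookkeeping that $\coker$ genuinely commutes with base change $R\to k(\mathfrak p)$ and that the displayed chain of equivalences is literally the composition of the steps above. I would therefore keep the write-up short, essentially remarking that the statement is immediate from the displayed equivalence together with the fact that a finitely generated module vanishing locally everywhere vanishes.
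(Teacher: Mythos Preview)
Your proof is correct and follows exactly the approach the paper takes: the paper records the chain of equivalences $M_{\mathfrak p}=0\Leftrightarrow M(\mathfrak p)=0\Leftrightarrow \rk(A(\mathfrak p))=n$ (via Nakayama) immediately before the statement and then lets the lemma follow ``Therefore'', with the local-to-global vanishing of $M=\coker(A)$ left implicit. Your write-up simply spells out each of these steps (right-exactness of tensor, the field case, local vanishing implies global vanishing) in more detail, but the argument is the same.
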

Recall that a local ring $R$ has Krull dimension $0$ if and only if every element of its maximal ideal is nilpotent, i.e. $\mathfrak{m}=Nil(R)$. We can show that many results regarding ranks of matrices over fields can be easily translated to the case of matrices over a commutative ring of dimension $0$,
\begin{proposition}\label{ale}
Let $R$ be a commutative ring of dimension $0$. An element $a\in R$ is invertible if and only if $a$ is not a zero divisor.
\end{proposition}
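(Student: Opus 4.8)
The plan is to prove the non-trivial implication; the direct one (an invertible element is never a zero divisor) holds in any ring and requires no dimension hypothesis. So assume $a\in R$ is not a zero divisor and, arguing by contradiction, suppose $a$ is not invertible. If $R=0$ there is nothing to prove, so assume $R\neq 0$; then $a$ lies in some maximal ideal $\mathfrak{m}\subset R$.

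First I would pass to the localization $R_{\mathfrak{m}}$. Since every prime of $R$ is maximal (this is exactly what dimension $0$ means), the only prime of $R$ contained in $\mathfrak{m}$ is $\mathfrak{m}$ itself, so $R_{\mathfrak{m}}$ is a nonzero local ring with a single prime, hence of dimension $0$. By the characterization recalled just before the statement, its maximal ideal equals its nilradical; since $a/1\in\mathfrak{m}R_{\mathfrak{m}}$, the element $a/1$ is nilpotent in $R_{\mathfrak{m}}$.

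Next I would translate this back to $R$: from $(a/1)^{n}=0$ in $R_{\mathfrak{m}}$ for some $n\ge 1$ one gets $s\,a^{n}=0$ in $R$ for some $s\notin\mathfrak{m}$. Choose $n\ge 1$ minimal with $s\,a^{n}=0$ (for this fixed $s$). Because $s\notin\mathfrak{m}$ we have $s\ne 0$, so minimality of $n$ forces $b:=s\,a^{n-1}\ne 0$ (for $n=1$ this reads $b=s\ne 0$). But $a\cdot b=s\,a^{n}=0$, contradicting that $a$ is not a zero divisor. Hence $a$ must be invertible.

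The argument is short; the only point to be careful about is resisting the temptation to invoke Artinian structure theory (a general zero-dimensional ring need not be a finite product of local rings) and instead using only localization together with the stated fact that a zero-dimensional local ring has nilpotent maximal ideal, plus the minimal-exponent trick that converts ``nilpotent after localizing'' into an honest zero-divisor relation in $R$. One could alternatively avoid localization by using that $R/Nil(R)$ is von Neumann regular, writing $a^{m}=a^{m+1}z$ and cancelling the non-zero-divisor $a^{m}$ to get $az=1$; but that presupposes more structure theory than the excerpt provides, so I would prefer the localization route.
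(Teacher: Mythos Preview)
Your proof is correct and follows essentially the same route as the paper's: pick a maximal ideal $\mathfrak{m}$ containing $a$, use that $R_{\mathfrak{m}}$ has nilpotent maximal ideal to get $a^{n}/1=0$, clear denominators to obtain $s\,a^{n}=0$ with $s\notin\mathfrak{m}$, and apply the minimal-exponent trick to exhibit a nonzero annihilator of $a$. Your version is in fact slightly more careful than the paper's in fixing $s$ before minimizing $n$, but the argument is the same.
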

\begin{proof}
If $a$ is not invertible then there is a maximal ideal $\mathfrak{m}\subset R$ with $a\in \mathfrak{m}$ and therefore $a/1$ belongs to the maximal ideal of the local ring $R_{\mathfrak{m}}$, thus there is a natural number $n$ with $a^{n}/1=0$, so there exists an element $b\in R\setminus \mathfrak{m}$ with $ba^{n}=0$. Assume $n$ is the smallest satisfying this condition, so $ba^{n-1}\neq 0$. Then $(ba^{n-1})a=0$, so $a$ is a zero divisor. The converse is trivial.
\end{proof}
\begin{corollary}
Let $R$ be a commutative ring of dimension $0$ and let $A\in Mat_{n\times n}(R)$. Then $A$ is invertible if and only if $rk(A)=n$.
\end{corollary}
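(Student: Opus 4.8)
The plan is to funnel both conditions through the determinant of $A$. In any commutative ring a square matrix $A\in Mat_{n\times n}(R)$ is invertible if and only if $\det(A)$ is a unit: if $AB=Id_{n}$ then $\det(A)\det(B)=1$, and conversely the adjugate identity $A\cdot\operatorname{adj}(A)=\det(A)\cdot Id_{n}$ shows that $\det(A)^{-1}\operatorname{adj}(A)$ is a two-sided inverse as soon as $\det(A)$ is known to be a unit. So it suffices to prove that, over a ring of Krull dimension $0$, one has $\det(A)$ a unit if and only if $rk(A)=n$.

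First I would unwind the determinantal rank of an $n\times n$ matrix. Here $r=\min\{n,n\}=n$, so $\mathcal{U}_{n}(A)$ is the bottom (smallest) ideal in the chain of minor ideals, and it is the principal ideal generated by the single top minor, $\mathcal{U}_{n}(A)=(\det A)$. Hence $Ann_{R}(\mathcal{U}_{n}(A))=Ann_{R}(\det A)$, and by the definition of $rk$ we get $rk(A)=n$ precisely when $Ann_{R}(\det A)=0$, i.e. precisely when $\det(A)$ is not a zero divisor. (This is the assertion already recorded in the paragraph preceding the corollary; it is the only spot where a short verification is needed.)

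Then I would simply invoke Proposition \ref{ale}: since $R$ has dimension $0$, an element of $R$ is a unit if and only if it is not a zero divisor. Applying this to $a=\det(A)$ gives the chain of equivalences $\det(A)\text{ a unit}\iff\det(A)\text{ not a zero divisor}\iff rk(A)=n$, which combined with the first paragraph's equivalence $A\text{ invertible}\iff\det(A)\text{ a unit}$ finishes the proof.

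I do not expect a real obstacle here: all the weight of the argument sits in Proposition \ref{ale}, which is already established, and the rest is the standard adjugate fact together with the bookkeeping identity $\mathcal{U}_{n}(A)=(\det A)$. The only point that deserves a moment's care is making sure that the annihilator condition characterizes $rk(A)=n$ exactly (not merely $rk(A)\ge n$ or $\le n$), which is immediate once one notes that $\mathcal{U}_{n}(A)$ is the last ideal in the chain because $A$ is square.
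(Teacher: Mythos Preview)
Your proof is correct and follows essentially the same route as the paper: both reduce to the equivalence $rk(A)=n \Leftrightarrow \det(A)$ is not a zero divisor (which the paper records just before the corollary) and then invoke Proposition~\ref{ale} to upgrade ``not a zero divisor'' to ``unit'' under the dimension-$0$ hypothesis. Your version simply makes explicit the adjugate argument and the identity $\mathcal{U}_{n}(A)=(\det A)$ that the paper leaves implicit.
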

\begin{proof}
Since $rk(A)=n$ if and only if $det(A)$ is not a zero divisor, the result follows from Proposition \ref{ale}.
\end{proof}

Another case of interest is $R=\mathbb{F}[z]$, being $\mathbb{F}$ a field. Since the units of the ring of polynomials $\mathbb{F}[z]$ are the nonzero constants, we have
\begin{proposition}\label{surpol}
Let $A(z)\in Mat_{n\times m}(\mathbb{F}[z])$ be a matrix with $n\leq m$. Then $A(z)$ is surjective if and only if $A(z_0)$ has rank $n$ fo all $z_0\in\overline{\mathbb{F}}$.
\end{proposition}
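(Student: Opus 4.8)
The plan is to reduce surjectivity of $A(z)$ over $\F[z]$ to the vanishing of the cokernel at every prime of $\F[z]$, using Lemma \ref{fullrank}, and then to identify those primes concretely. First I would invoke Lemma \ref{fullrank}: since $n\leq m$, the matrix $A(z)\in Mat_{n\times m}(\F[z])$ is surjective if and only if $\rk(A(z)(\mathfrak p))=n$ for every $\mathfrak p\in\Spec(\F[z])$. So the task is to understand the residue fields $k(\mathfrak p)$ and the induced matrices.

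Next I would run through the primes of $\F[z]$. The ring $\F[z]$ is a PID, so its primes are the zero ideal $(0)$ and the maximal ideals $(p(z))$ generated by monic irreducibles $p(z)$. For $\mathfrak p=(0)$ the residue field is $\F(z)$, and $\rk$ over a field is the usual rank; I would note that this rank condition is implied by the rank condition at a maximal ideal (or handled directly — if the rank drops over $\F(z)$ it drops over $\F[z]/(p(z))$ for cofinitely many $p$). For a maximal ideal $(p(z))$ the residue field is the finite extension $\F[z]/(p(z))$; choosing a root $z_0\in\overline{\F}$ of $p(z)$ gives an embedding $\F[z]/(p(z))\hookrightarrow\overline\F$, and the reduction of $A(z)$ modulo $(p(z))$ has the same rank as the matrix $A(z_0)$ with entries in $\overline\F$, because rank of a matrix over a field is unchanged by field extension (or: by the base-change stability of the ideals $\mathcal U_i$). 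Conversely every $z_0\in\overline{\F}$ is a root of its minimal polynomial over $\F$, which is one of the monic irreducibles $p(z)$, so the conditions "$\rk A(z)(\mathfrak p)=n$ for all maximal $\mathfrak p$" and "$A(z_0)$ has rank $n$ for all $z_0\in\overline\F$" are equivalent.

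Putting these together: $A(z)$ is surjective $\iff$ $\rk(A(z)(\mathfrak p))=n$ for all $\mathfrak p\in\Spec(\F[z])$ $\iff$ $A(z_0)$ has rank $n$ for all $z_0\in\overline\F$ (the generic point $(0)$ being subsumed, since if $A(z_0)$ has full rank $n$ at even one $z_0$ then the $n\times n$ minors do not all vanish identically, hence $A(z)$ has rank $n$ over $\F(z)$ as well).

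The main obstacle — really the only point needing care — is the bookkeeping between a closed point $\mathfrak p=(p(z))$ of $\Spec(\F[z])$ and the geometric points $z_0\in\overline\F$ lying over it, together with the fact that determinantal rank is insensitive to the finite field extension $\F[z]/(p(z))\hookrightarrow\overline\F$; this is exactly the base-change stability $\mathcal U_i(A\otimes 1)=\mathcal U_i(A)\cdot S$ recorded before Lemma \ref{surin}, so no new input is required, but it should be stated explicitly so the passage from "all residue fields" to "all points of $\overline\F$" is transparent.
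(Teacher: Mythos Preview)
Your argument is correct, but it takes a different route from the paper's. The paper invokes Lemma~\ref{surin} rather than Lemma~\ref{fullrank}: surjectivity of $A(z)$ is equivalent to $\mathcal{U}_n(A(z))=\F[z]$, i.e.\ the ideal generated by the $n\times n$ minors $f_1(z),\ldots,f_l(z)$ is the whole ring; since $\F[z]$ is a PID this amounts to $\gcd(f_1,\ldots,f_l)=1$, which in turn means the minors have no common root in $\overline{\F}$, i.e.\ $A(z_0)$ has rank $n$ for every $z_0\in\overline{\F}$. This is a concrete, purely polynomial argument and avoids any discussion of $\Spec(\F[z])$ or residue fields. Your approach via Lemma~\ref{fullrank} is more scheme-theoretic: you localize at each prime, identify closed points with Galois orbits in $\overline{\F}$, and use invariance of rank under the field extension $\F[z]/(p(z))\hookrightarrow\overline{\F}$, with a separate (easy) check at the generic point. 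Both are short; the paper's version is slightly more elementary and self-contained, while yours makes the geometric content (closed points of the affine line versus geometric points) explicit and would generalize more readily to base rings other than a field.
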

\begin{proof}
By Lemma \ref{surin}, $A(z)$ is surjective if and only if $\mathcal{U}_n(A(z))=\mathbb{F}[z]$. Let $f_1(z),\hdots,f_l(z)$ be the set of maximum size minors of $A(z)$. Then the above condition is equivalent to $gcd(f_1(z),\hdots,f_l(z))=1$, which is equivalent to the condition $\{f_1(z),\hdots,f_l(z)\}$ have no common roots in $\overline{\mathbb{F}}$.
\end{proof}

\section{Families of Convolutional Codes and their First Order Representations}

\subsection{Families of Convolutional Codes}

Let $R$ be a commutative ring. 

\begin{definition}
A  $(n,k)$ convolutional code over $R$ is a free submodule $\mathfrak{C}\subset R[z]^{n}$ of rank $k$, and such that $R[z]^{n}/\mathfrak{C}$ is flat over $R$.
\end{definition}

Note that  flatness of the quotient $R[z]^{n}/\mathfrak{C}$ allow us to interpret $\mathfrak{C}$ as a family of convolutional codes parametrized by $Spec(R)$. Otherwise, $\mathfrak{C}(\mathfrak{p})$ might not be a submodule of $R[z]^{n}$ anymore. In this setting, the complexity, or the degree, of the code $\mathfrak{C}$ is no longer an integer but a function $\delta:Spec(R)\rightarrow \mathbb{N}$. 
\begin{remark}
In the rest of the paper, we will assume that the degree function $\delta$ is a constant.
\end{remark}
Wen define generator matrices and encoders for families of convolutional codes, following the classical case.
\begin{definition}
A generator matrix $G(z)$ of a  $(n,k)$ convolutional code over $R$, $\mathfrak{C}$, is a matrix $G(z)\in Mat_{n\times l}(R[z])$, with $l\geq k$, such that $Im(G(z))=\mathfrak{C}$. A generator matrix of size $n\times k$ is called an encoder.  
\end{definition}
By  \cite[Prop. 1.3]{vas}, it follows that any encoder is injective, as in the usual case.
\begin{example}
Consider the ring $R=\mathbb{Z}/6\mathbb{Z}$ and the matrix 
\begin{equation*}
G(z)=\left(\begin{array}{cc}
z+3 & 5 \\
3z^{2}+1 & -2z+2 \\
-z^{2}+4z-1 & 3z-3
\end{array}
\right)
\end{equation*}
Note that the full size minors are $\{z^{2}+2z+1,2z^{2}+4z+2,z^{3}+z^{2}+5z+5\}$. Since $Ann_{R}(\mathcal{U}_{3}(G(z)))=0$, we conclude that $G(z)$ is injective (see Lemma \ref{surin}), so its image, $\mathfrak{C}\subset R[z]^{3}$, is a free $R[z]$-module. Since $R[z]^{3}/\mathfrak{C}$ is $R$-flat (R is an absolutely flat ring), we deduce that $\mathfrak{C}$ is a $(3,2)$ family of convolutional codes over $R$ and $G(z)$ is an encoder for $\mathfrak{C}$.
\end{example}
\subsection{Observability}
A very important property for classical convolutional codes is observability. Recall that given a  $(n,k)$ convolutional code $\mathcal{C}\subset\mathbb{F}[z]^{n}$, we say that it is observable if there exists a surjection (the so called syndrome Former) $\psi:\mathbb{F}[z]^{n}\twoheadrightarrow\mathbb{F}[z]^{n-k}$ such that $Ker(\psi)=\mathcal{C}$ (see \cite[Lemma 3.3.2]{York}). Note that this is the same as saying that the quotient $\mathbb{F}[z]^{n}/\mathcal{C}$ is a flat $\mathbb{F}[z]$-module. Therefore we define
\begin{definition}
Let $\mathfrak{C} \subset R[z]^{n}$ be a family of convolutional codes over $R$. We say that $\mathfrak{C}$ is observable if the quotient $R[z]^{n}/ \mathfrak{C}$ is flat over $R[z]$.
 \end{definition}
Since flatness is preserved after base change it turns out from the definition that any member $\mathfrak{C}(\mathfrak{p})$ of the family is observable. 
\begin{example}
Consider the ring $R=\mathbb{Z}/6\mathbb{Z}$ and let 
\begin{equation*}
G(z)=\left(\begin{array}{cc}
z+3 & 5 \\
3z^{2}+1 & -2z+2 \\
-z^{2}+4z-1 & 3z-3
\end{array}
\right)
\end{equation*}
be the matrix considered in above example. As we have shown, the image defines a family of convolutional codes and $G(z)$ is an encoder. Let us show that the family of convolutional codes defined by $G(z)$ is not observable. Since flatness is preserved after base change, it is enough to show that the restriction of the quotient $R[z]^{3}/Im(G(z))$ to $\mathbb{Z}/2\mathbb{Z}$ is not flat.  Note that the restriction of $G(z)$ modulo $2$ is given by
\begin{equation*}
G_1(z)=\left(\begin{array}{cc}
z-1 & 1 \\
z^{2}+1 & 0 \\
z^{2}+1 & z+1
\end{array}
\right)
\end{equation*}
An easy computation shows that 
$$\mathcal{U}_{2}(G_{1}(z))=<z^{2}+1,(z^{2}+1)(z+1)>$$ 
Since $gcd(z^{2}+1,(z^{2}+1)(z+1))=z^{2}+1$, we have 
$$\mathcal{U}_{2}(G_{1}(z))=(z^{2}+1)\neq R$$ 
so $(\mathbb{Z}/2\mathbb{Z})[z]^{3}/Im(G_{1}(z))$ is not flat over $(\mathbb{Z}/2\mathbb{Z})[z]$. Since $R[z]^{3}/Im(G(z))$ is $R$-flat, 
$$(\mathbb{Z}/2\mathbb{Z})[z]^{3}/Im(G_{1}(z))\simeq R[z]^{3}/Im(G(z))\otimes_{R}\mathbb{Z}/2\mathbb{Z}$$
so, we conclude that $R[z]^{3}/Im(G(z))$ is not flat over $R[z]$.

\end{example}
The particular case we are interested in is when $R$ is a finite product of commutative rings, $R=R_{1}\times\hdots \times R_{t}$. In this case we have,
\begin{proposition}
\label{observableiff}
$\mathfrak{C}$ is observable $\Leftrightarrow$ $\mathcal{C}_{j}$ is observable $\forall$ $j$.
\end{proposition}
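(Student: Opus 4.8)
The plan is to reduce the claim to the behavior of flatness under the product decomposition $R = R_1 \times \cdots \times R_t$ of the coefficient ring, and then apply Corollary \ref{restriction}. First I would set up notation: write $e_i \in R$ for the structural idempotents, so that $R[z] = R_1[z] \times \cdots \times R_t[z]$ as a product ring, and write $\pi_i : R[z] \to R_i[z]$ for the projections. Given the family $\mathfrak{C} \subset R[z]^n$, the members $\mathcal{C}_j := \pi_j(\mathfrak{C}) \subset R_j[z]^n$ are its images under these projections; by Corollary \ref{restriction} (applied to an encoder $G(z)$ of $\mathfrak{C}$, viewing $\mathfrak{C}$ as $\Ker(G(z) \mid -\mathrm{Id})$ or more directly as $\Img(G(z))$) one has $\mathfrak{C} = e_1 \mathcal{C}_1 \oplus \cdots \oplus e_t \mathcal{C}_t$, and hence
\begin{equation*}
R[z]^n / \mathfrak{C} \;\simeq\; \bigoplus_{j=1}^{t} R_j[z]^n / \mathcal{C}_j
\end{equation*}
as $R[z]$-modules, where $R_j[z]$ acts on the $j$th summand through $\pi_j$.

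The next step is the module-theoretic observation that over a product ring $R[z] = \prod_j R_j[z]$, a module $M = \bigoplus_j M_j$ (with $M_j$ an $R_j[z]$-module) is flat over $R[z]$ if and only if each $M_j$ is flat over $R_j[z]$. This is standard: localization at a prime $\mathfrak{q} \subset R[z]$ factors through exactly one of the factors $R_j[z]$ (the idempotents force all but one summand to vanish after localization), so $M_\mathfrak{q}$ agrees with $(M_j)_{\mathfrak{q}_j}$ for the appropriate $j$ and corresponding prime $\mathfrak{q}_j \subset R_j[z]$; since flatness can be checked locally at all primes, the equivalence follows. Alternatively one can argue directly with the tensor-product characterization of flatness, using that an exact sequence of $R[z]$-modules decomposes into exact sequences of $R_j[z]$-modules under the idempotent splitting.

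Combining these, $R[z]^n/\mathfrak{C}$ is flat over $R[z]$ if and only if each $R_j[z]^n/\mathcal{C}_j$ is flat over $R_j[z]$, which is precisely the statement that $\mathfrak{C}$ is observable if and only if every $\mathcal{C}_j$ is observable. I would also remark that for this to make sense one should note each $\mathcal{C}_j$ is itself a $(n,k)$ convolutional code over $R_j$ (freeness of $\mathcal{C}_j$ and flatness of $R_j[z]^n/\mathcal{C}_j$ over $R_j$ both follow from the corresponding properties of $\mathfrak{C}$ by applying the base change $\pi_j$, using that base change preserves flatness and that a direct summand of a free module localized at an idempotent stays free).

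The main obstacle — really the only point requiring care — is justifying the direct-sum decomposition $R[z]^n/\mathfrak{C} \simeq \bigoplus_j R_j[z]^n/\mathcal{C}_j$ cleanly, i.e. making sure that passing to the quotient commutes with the idempotent splitting; this is exactly what Corollary \ref{restriction} is designed to give us (it identifies $\mathfrak{C}$ with the internal direct sum $\bigoplus_j e_j \mathcal{C}_j$), so once that corollary is invoked the rest is a routine flat-locally-at-primes argument. Everything else is bookkeeping with idempotents.
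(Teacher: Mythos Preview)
Your proposal is correct and follows essentially the same route as the paper: both arguments establish the direct-sum decomposition $R[z]^n/\mathfrak{C}\simeq\bigoplus_j R_j[z]^n/\mathcal{C}_j$ and then use that flatness over the product ring $R[z]$ is equivalent to flatness of each summand over its factor $R_j[z]$. The only cosmetic differences are that the paper treats the two implications separately (forward via base change, reverse via ``flatness is stable under finite products'') whereas you package them as a single biconditional via localization at primes, and that you invoke Corollary~\ref{restriction} explicitly (through the identification $\mathfrak{C}=\Ker(G(z)\mid -\mathrm{Id})$) where the paper simply asserts the commutative diagram.
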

\begin{proof}
The direct implication is clear. Let us show the converse.  If $\mathcal{C}_{j}$ is observable, then $R_{j}[z]^{n}/\mathcal{C}$ is a $R_{j}[z]$- flat module for $j=1\ldots,t$. For each $j$ we can consider the following exact sequence
\begin{equation}
 \label{cuatrocuatro}
 0 \longrightarrow \mathcal{C}_{j} \hookrightarrow R_{j}[z]^{n} \longrightarrow R_{j}[z]^{n}/\mathcal{C}_{j} \longrightarrow 0
 \end{equation}
and then we can construct the following exact sequence
$$
 \xymatrix{
 0 \ar[r]& \bigoplus_{j}^{t} \mathcal{C}_{j} \ar@{^(->}[r]& \bigoplus_{j}^{t} R_{j}[z]^{n} \ar[r]& \bigoplus_{j}^{t} (R_{j}[z]^{n}/\mathcal{C}_{j}) \ar[r] & 0 \\
 0 \ar[r]& \mathfrak{C} \ar@{^(->}[r]\eq[u] & R[z]^{n} \ar[r]\eq[u] & R[z]^{n} / \mathfrak{C} \ar[r]\eq[u]&  0
  }
$$
Since flatness is stable under finite products, $\bigoplus_{j} (R_{j}[z]^{n}/\mathcal{C}_{j})$ is a flat $R[z]$-module. So $R[z]^{n} / \mathfrak{C}$ is a flat $R[z]$-module and $\mathfrak{C}$ is observable.
\end{proof}
\subsection{Minimal First Order Representations of Families of Convolutional Codes}
Let us define now first order representations of convolutional codes over a commutative ring $R$,
\begin{definition}
A  first order representation of a $(n,k)$ family of convolutional codes of degree $\delta$ over $R$ is a triple of matrices $(K,L,M)$ with $K,L\in Mat_{\delta+n-k\times \delta}(R)$ and $M\in Mat_{\delta+n-k\times n}(R)$ satisfying $\mathfrak{C}=Ker(f_{1}|f_{2})$ (where $f_{2}=zK+L$ and $f_{1}=M$). Moreover, if the following conditions are verified,
\begin{enumerate}
\item the matrix $K:R^{\delta}\rightarrow R^{\delta+n-k}$ is  injective with flat cokernel
\item the matrix $(K,M):R^{\delta+n}\rightarrow R^{\delta+n-k}$ is surjective
\item the matrix $(zK+L,M): R[z]^{\delta+n}\rightarrow R[z]^{\delta+n-k}$ is surjective
\end{enumerate}
then it is called minimal.
\end{definition}
\begin{remark}
Note that in case $R=\mathbb{F}$, above conditions agree with the classical ones: (1) and (2) are trivial and (3) follows easily by Proposition \ref{surpol}. Note also that the equivalence relation we had for fields can be adapted to this general setting in the obvious way.
\end{remark}
The central definition of this paper is the following,
\begin{definition}
A commutative ring $R$ is a \emph{ring with representations} if every convolutional codes $\mathfrak{C}$ over $R$ has a unique (up to equivalence) minimal first order representation.
\end{definition}
After giving these definitions an important question comes up: what conditions on the base ring $R$ ensure us the existence of minimal first order representations for a fixed convolutional code? It is well known the existence for classical convolutional codes, i.e., for $R=\mathbb{F}$ a finite field. 

In this section we will show that if $R$ is a finite product of rings, $R_1,\hdots,R_t$, then it is enough to prove the existence (and uniqueness) of minimal first order representations for each $R_i$.

Let $R$ be a commutative ring splitting into a finite product of rings $R \simeq R_{1}\times\hdots\times R_{t}$ and denote by 
\begin{equation*}
I_{j}=R_{1}\times\hdots\times\widehat{R}_{j}\times\hdots\times R_{t}
\end{equation*}
the ideal generated by all the components except $R_{j}$. Then we have an exact sequence
\begin{equation*}
0\rightarrow I_{j}\hookrightarrow R\rightarrow R_{j}\rightarrow 0
\end{equation*}
for all $j=1,\hdots, t$.
First we have the following basic properties:

\begin{lemma}
\label{lema}
\label{properties}
We consider the ring $R=R_{1}\times\hdots\times R_{t}$ and let $A$ be a matrix over $R$. Let us denote by $A_{j}:=A \ mod(I_{j})$. Then, the following holds:
\begin{enumerate}
\item $A_{j}=e_{j}A$.
\item There are isomorphisms $Mat_{n\times m}(R)\simeq \prod_{j=1}^{t} Mat_{n\times m}(R_{j})$ and $Gl_{n}(R)\simeq\prod_{j=1}^{t} Gl_{n}(R_{j})$ given by $A\rightarrow (e_{j}A)_{j=1,\hdots, t}$.
\item If $A_j$ are matrices whose rows are free over $R_{j}$ for each $j$, then $A$ is a matrix whose rows are free over $R$.
\item If $A_{j}$ are matrices whose columns are free over $R_{j}$ for each $j$, then $A$ is a matrix whose columns are free over $R$.
\end{enumerate}
\end{lemma}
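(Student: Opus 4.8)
The plan is to establish each of the four items by reducing to the classical structure theory of products of rings, namely the decomposition $R \simeq \prod_{j=1}^t R_j$ via the complete system of orthogonal idempotents $e_1,\dots,e_t$ with $e_ie_j=\delta_{ij}e_i$ and $\sum e_j=1$. Throughout, the key observation is that $A \mapsto (e_jA)_j$ is not merely bijective but an isomorphism of rings (for square matrices) and of modules respecting all the algebraic structure, so properties that are \emph{componentwise} can be glued.

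First I would prove item (1): since $R/I_j \simeq R_j$ and $I_j = (1-e_j)R$ (the $I_j$ is exactly the ideal killed by $e_j$), reduction modulo $I_j$ is the same as multiplication by $e_j$ followed by the canonical identification $e_jR \simeq R_j$; applying this entrywise to $A$ gives $A_j = e_jA$ under the identification $Mat_{n\times m}(R_j) \simeq e_j Mat_{n\times m}(R)$. For item (2), I would check that the map $\varphi\colon A \mapsto (e_jA)_j$ is additive and, because the $e_j$ are central orthogonal idempotents, multiplicative: $(e_jA)(e_jB) = e_j^2 AB = e_j(AB)$ while cross terms vanish; its inverse is $(A_j)_j \mapsto \sum_j A_j$ (viewing each $A_j$ inside $Mat_{n\times m}(R)$ via $e_j$). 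Restricting to square matrices and noting $\varphi$ sends $Id$ to $(Id)_j$, it carries units to units, giving $Gl_n(R) \simeq \prod_j Gl_n(R_j)$.

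For items (3) and (4), the content is that "freeness of rows/columns" is a componentwise condition. I would argue as follows for rows (columns being the transpose statement): a set of vectors in $R^m$ (the rows of $A$, say $r$ of them) is free over $R$ exactly when the only relation $\sum_i c_i \rho_i = 0$ with $c_i \in R$ is the trivial one. Given such a relation over $R$, multiply by $e_j$: we get $\sum_i (e_jc_i)(e_j\rho_i)=0$, a relation among the rows of $A_j$ over $R_j$; by hypothesis each $e_jc_i = 0$ in $R_j$, i.e.\ $e_jc_i=0$ in $R$ for all $j$, hence $c_i = \sum_j e_jc_i = 0$. So the rows of $A$ are free over $R$. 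The same computation with columns proves item (4). One should phrase "free" here as "$R$-linearly independent", consistent with how the paper uses the term (e.g.\ "row full size rank"), but the proof is identical if "free direct summand" is intended, since splitting a projection can also be done componentwise and reassembled by $\sum_j e_j(\cdot)$.

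I do not expect a serious obstacle: the only point requiring care is bookkeeping the two uses of the symbol $A_j$ — as a matrix over $R_j$ versus as the matrix $e_jA$ over $R$ — and making sure the identifications $e_jR \simeq R_j$ are applied consistently, which item (1) is precisely designed to license. The mild subtlety in (2) is verifying multiplicativity, which hinges on orthogonality $e_ie_j = 0$ for $i\neq j$; once that is noted the rest is formal. If anything, the phrase "matrix whose rows are free" would be worth pinning down with one sentence before the proof so that (3) and (4) are unambiguous.
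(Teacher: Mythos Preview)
Your proposal is correct and follows the natural route via the complete system of orthogonal idempotents. In fact the paper states this lemma as a list of ``basic properties'' and supplies no proof at all, so your argument is strictly more than what the paper provides; the approach you take (decompose via $e_j$, check multiplicativity from $e_ie_j=\delta_{ij}e_i$, and glue relations componentwise using $\sum_j e_j=1$) is exactly the standard one the authors evidently have in mind.
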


\begin{proposition}{(Existence)}
\label{existencia2}
Let $R$ be a commutative ring and assume that there is a decomposition $R \simeq R_{1}\times\hdots\times R_{t}$ such that $R_{j}$ is a ring with representations for all $j$. Then there exists a minimal first order representation for any $(n,k)$ family of convolutional codes, $\mathfrak{C}\subset R^{n}[z]$, of degree $\delta$.
\end{proposition}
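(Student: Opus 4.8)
The plan is to reduce the statement to the component rings $R_j$ via the product decomposition and Corollary \ref{restriction}, and then glue the pieces back together using Lemma \ref{properties}. Concretely, given a $(n,k)$ family of convolutional codes $\mathfrak C\subset R[z]^n$ of constant degree $\delta$, I would first consider its image $\mathcal C_j:=e_j\mathfrak C=\pi_j(\mathfrak C)$ under the $j$th projection. The first step is to check that each $\mathcal C_j$ is a $(n,k)$ family of convolutional codes over $R_j$ of the same degree $\delta$: freeness of $\mathcal C_j$ and flatness of $R_j[z]^n/\mathcal C_j$ follow from base change along $R\to R_j$ applied to the definition of $\mathfrak C$ (flat, in fact projective/direct-summand base change since $R_j$ is a direct factor), and the degree is constant by the standing assumption together with the fact that $\Spec(R)=\coprod_j\Spec(R_j)$. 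Since each $R_j$ is a ring with representations, we obtain a minimal first order representation $(K_j,L_j,M_j)$ of $\mathcal C_j$, with $K_j,L_j\in Mat_{\delta+n-k\times\delta}(R_j)$ and $M_j\in Mat_{\delta+n-k\times n}(R_j)$.

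The second step is to assemble $(K,L,M)$ over $R$ from the $(K_j,L_j,M_j)$ using the isomorphism $Mat_{p\times q}(R)\simeq\prod_j Mat_{p\times q}(R_j)$ of Lemma \ref{properties}(2): set $K:=(K_j)_j$, $L:=(L_j)_j$, $M:=(M_j)_j$, so that $e_jK=K_j$ etc. Then I must verify that $\mathfrak C=Ker(M\mid zK+L)$. This is exactly Corollary \ref{restriction}: with $A(z)=M$ (viewed over $R[z]$, constant in $z$) and $B(z)=zK+L$, one has $Ker(A(z)\mid B(z))=\sum_j e_j Ker(\pi_j(A)(z)\mid\pi_j(B)(z))=\sum_j e_j Ker(M_j\mid zK_j+L_j)=\sum_j e_j\mathcal C_j=\mathfrak C$, the last equality because $\mathfrak C\simeq\bigoplus_j\mathcal C_j$ compatibly with the inclusions into $R[z]^n\simeq\bigoplus_j R_j[z]^n$. (A small sign/ordering remark: the paper's convention writes $f_1=M$, $f_2=zK+L$, so I would state the kernel as $Ker(M\mid zK+L)$ throughout.)

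The third step is to check the three minimality conditions for $(K,L,M)$, and each one passes through the component-wise description painlessly. Condition (1): $K$ is injective iff each $K_j$ is injective (a map of $R$-modules is injective iff its restriction to each direct summand is, since the direct sum of injections is an injection), and $\coker(K)$ is $R$-flat iff each $\coker(K_j)=e_j\coker(K)$ is $R_j$-flat, because flatness is stable under finite products (this is the same argument used in Proposition \ref{observableiff}); alternatively one can invoke Lemma \ref{properties}(4) for the freeness of the columns of $\coker$'s presentation. Condition (2): $(K,M)$ is surjective iff each $(K_j,M_j)$ is surjective, since a direct sum of surjections is a surjection and conversely (equivalently, use Lemma \ref{fullrank} fiberwise, as $\Spec(R)$ is the disjoint union of the $\Spec(R_j)$). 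Condition (3): identically, $(zK+L,M)$ is surjective over $R[z]$ iff each $(zK_j+L_j,M_j)$ is surjective over $R_j[z]$, again because $R[z]\simeq\prod_j R_j[z]$ and surjectivity is checked on direct summands. Hence $(K,L,M)$ is a minimal first order representation of $\mathfrak C$, which proves existence. I do not expect a serious obstacle here: the only things to be careful about are (a) confirming that base change along $R\to R_j$ genuinely sends a family of convolutional codes of degree $\delta$ to one of degree $\delta$ — freeness of $\mathcal C_j$ is where one should be slightly attentive, but it follows since $e_j\mathfrak C$ is a direct summand of the free module $\mathfrak C$ and $R_j$ is a direct factor, so $\mathcal C_j\cong\mathfrak C\otimes_R R_j$ is free of rank $k$ — and (b) matching the paper's $(f_1\mid f_2)$ ordering conventions when invoking Corollary \ref{restriction}. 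Uniqueness up to equivalence is not asserted in this Proposition (it is the "Existence" half), so I would not address it here; it would follow from the analogous componentwise argument together with Lemma \ref{properties}(2) for $Gl$.
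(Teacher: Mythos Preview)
Your proposal is correct and follows essentially the same route as the paper: restrict $\mathfrak C$ to each factor $R_j$, invoke the hypothesis that $R_j$ is a ring with representations to obtain $(K_j,L_j,M_j)$, glue via the structural idempotents and Corollary~\ref{restriction}, and verify the three minimality conditions componentwise using Lemma~\ref{properties}. The only adjustment needed is the $(f_1,f_2)$ ordering you flagged: despite the wording in the definition, the paper's own proof (consistently with the requirement that $\Ker(f_1\mid f_2)\subset R[z]^n$) takes $f_1=zK+L$ and $f_2=M$, so the code should be written as $\Ker(zK+L\mid M)$ rather than $\Ker(M\mid zK+L)$.
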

\begin{proof}
Let $\mathcal{C}_{j}\simeq \mathfrak{C}\otimes_{R} R_{j}$ be the restriction of $\mathfrak{C}$ to $R_{j}$. From the definition of $\mathfrak{C}$ we know that $\mathcal{C}_{j}$ is a $(n,k)$- convolutional code over $R_{j}$ with constant degree $\delta$. Then, there are matrices $K_{j},L_{j} \in Mat_{(\delta +n-k) \times \delta}(R_j)$ and $M_{j}\in Mat_{(\delta+n-k) \times n}(R_j)$ such that $\mathcal{C}_{j}=\Ker(f_{1}^{j}|f_{2}^{j})$ where $f_{1}^{j}=zK_{j}+L_{j}$ and $f_{2}^{j}=M_{j}$. Let $e_{1},\hdots,e_{t}$ be the structural idempotents and consider $K=\sum_{j=1}^{t}K_{j}e_{j}$, $L=\sum_{j=1}^{t}L_{j}e_{j}$ and $M=\sum_{j=1}^{t}M_{j}e_{j}$. We define $f_{1}=zK+L$ and $f_{2}=M$. Then, we clearly have $\mathcal{C}_{j}=\Ker(f_{1}\otimes e_{j}|f_{2}\otimes e_{j})$ (see Corollary \ref{restriction}). Since $\mathfrak{C}\simeq\oplus_{j=1}^{t}\mathcal{C}_{j}e_{j}$ we finally get
\begin{equation*}
\mathfrak{C}=\Ker(f_{1}|f_{2})
\end{equation*}
The fact that $K,L,M$ satisfy minimality conditions 1) and 2) follows from Lemma \ref{lema}. The third condition of minimality is trivial since the matrix $(f_{1},f_{2})$ is surjective on each component $R_{j}$.
\end{proof}

\begin{proposition}{(Uniqueness)}
\label{unicidad2}
Let $R$ be a commutative ring as in Proposition \ref{existencia2}. Moreover, assume that first order representations of each $\mathcal{C}_{j}$ are unique up to equivalence over all $R_{j}$. Then for any $(n,k)$ family of convolutional codes $\mathfrak{C}$ over $R$ of degree $\delta$ and any two first order representations $(K,L,M)$ and $(K',L',M')$ of $\mathfrak{C}$ there exist invertible matrices over $R$, $S$ and $T$, such that
\begin{equation*}
(K',L',M')=(TKS^{1},TLS^{1},TM)
\end{equation*}
\end{proposition}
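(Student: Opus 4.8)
The plan is to transport the statement to each factor $R_j$ via the structural idempotents $e_1,\dots,e_t$, in exactly the same spirit as the proof of Proposition \ref{existencia2}, and then reassemble. I would set $K_j:=e_jK$, $L_j:=e_jL$, $M_j:=e_jM$, and likewise $K'_j,L'_j,M'_j$. Applying the projection $\pi_j$ to $\mathfrak C=Ker(zK+L\mid M)$ and invoking Corollary \ref{restriction} (each component of the kernel being selected by the corresponding orthogonal idempotent), one gets $\mathcal C_j=Ker(zK_j+L_j\mid M_j)$, and similarly $\mathcal C_j=Ker(zK'_j+L'_j\mid M'_j)$, where $\mathcal C_j=\mathfrak C\otimes_R R_j$ is the restriction of $\mathfrak C$. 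Since the degree of $\mathfrak C$ is the constant $\delta$, each $\mathcal C_j$ is an $(n,k)$ code of degree $\delta$ and the sizes are right: $K_j,L_j\in Mat_{(\delta+n-k)\times\delta}(R_j)$ and $M_j\in Mat_{(\delta+n-k)\times n}(R_j)$; hence $(K_j,L_j,M_j)$ and $(K'_j,L'_j,M'_j)$ are first order representations of $\mathcal C_j$ over $R_j$.

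The second step is to verify that these restricted triples inherit the minimality of $(K,L,M)$ and $(K',L',M')$. Here the key observation is that $R\to R_j$ is flat, because $R_j$ is a direct summand of $R$ as an $R$-module (from $R=R_j\oplus I_j$), and likewise $R[z]\to R_j[z]$ is flat. Consequently injectivity of $K$, surjectivity of $(K,M)$, and surjectivity of $(zK+L,M)$ over the polynomial ring are all preserved under base change, and $\coker(K_j)=\coker(K)\otimes_R R_j$ remains flat since flatness is stable under base change. Thus $(K_j,L_j,M_j)$ and $(K'_j,L'_j,M'_j)$ are minimal first order representations of $\mathcal C_j$.

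Now I would apply the hypothesis factor by factor: for each $j$ there exist invertible matrices $S_j\in Gl_\delta(R_j)$ and $T_j\in Gl_{\delta+n-k}(R_j)$ with $(K'_j,L'_j,M'_j)=(T_jK_jS_j^{-1},T_jL_jS_j^{-1},T_jM_j)$. Setting $S:=\sum_{j=1}^t S_je_j$ and $T:=\sum_{j=1}^t T_je_j$, Lemma \ref{lema} and the isomorphism $Gl_m(R)\simeq\prod_j Gl_m(R_j)$ give that $S,T$ are invertible over $R$ and that $e_jS^{-1}=S_j^{-1}$. Finally, using $Mat_{n\times m}(R)\simeq\prod_j Mat_{n\times m}(R_j)$ it is enough to check the claimed identity componentwise, and $e_j(TKS^{-1})=T_jK_jS_j^{-1}=K'_j=e_jK'$ for all $j$ (and similarly for $L$ and $M$), so $(K',L',M')=(TKS^{-1},TLS^{-1},TM)$.

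I expect the whole argument to be essentially formal; the only place where real content is used is the descent of the minimality conditions along $R\to R_j$, which rests on the flatness of $R_j$ over $R$ and the stability of injectivity, surjectivity and flatness of the cokernel under base change — so that is the step I would write out most carefully. If in addition one wants the pair $(S,T)$ to be unique, this follows from the corresponding uniqueness over each $R_j$ through the same ring isomorphisms.
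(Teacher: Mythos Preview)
Your argument is essentially the paper's own proof: restrict both triples to each factor $R_j$ via Corollary~\ref{restriction}, observe that the minimality conditions are stable under this base change, invoke the hypothesis on each $R_j$ to get $(S_j,T_j)$, and glue via Lemma~\ref{lema}(2). The paper simply asserts that ``minimality conditions (1), (2), (3) are stable under base change'' and appeals to Lemma~\ref{properties}(2) for the reassembly; you spell out the same steps in more detail, in particular the reason why injectivity of $K$ and flatness of $\coker(K)$ descend (namely that $R\to R_j$ is flat because $R_j$ is a direct summand of $R$), which is exactly the point the paper leaves implicit.
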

\begin{proof}
Let $(K,L,M)$ and $(K',L',M')$ be first order representations of the family of convolutional codes  $\mathfrak{C}\subset R^{n}[z]$ and $(K_{j},L_{j},M_{j})$, $(K'_{j},L'_{j},M'_{j})$ their restrictions to $R_{j}$. By Corollary \ref{restriction} and by the fact that minimality conditions (1), (2), (3) are stable under base change, we know that these are first order representations for $\mathcal{C}_{j}$, so there are invertible matrices $S_{j}$, $T_{j}$ such that
\begin{equation*}
(K'_{j},L'_{j},M'_{j})=(T_{j}K_{j}S^{-1}_{j},T_{j}L_{j}S^{-1}_{j},T_{j}M_{j})
\end{equation*}
By Lemma \ref{properties} (2) we know that there are unique invertible matrices over $R$, say $S$ and $T$, such that $S \ mod(I_{j})=S_{j}$ and $T \ mod(I_{j})=T_{j}$. Obviously $K'=TKS^{-1}$, $L'=TLS^{-1}$ and $M'=TM$.
\end{proof}
We conclude with our main result:
\begin{theorem}\label{pupuri}
Let $ R_{1},\hdots, R_{t}$ be a commutative rings. If $R_{j}$ is a ring with representations for all $j=1,\ldots,t$ then $R=R_{1}\times\hdots\times R_{t}$ is a ring with representations.
\end{theorem}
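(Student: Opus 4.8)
The plan is to observe that Theorem \ref{pupuri} is an immediate consequence of the two preceding propositions. By definition, saying that $R_j$ is a ring with representations means exactly that every $(n,k)$ convolutional code over $R_j$ (of any prescribed constant degree $\delta$) admits a minimal first order representation, and that this representation is unique up to the equivalence relation $(K',L',M')=(TKS^{-1},TLS^{-1},TM)$. These two assertions are precisely the hypotheses needed to apply, respectively, Proposition \ref{existencia2} and Proposition \ref{unicidad2} to the product ring $R=R_{1}\times\cdots\times R_{t}$, so the whole proof reduces to checking that those hypotheses hold and then quoting the two propositions.

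First I would fix a $(n,k)$ family of convolutional codes $\mathfrak{C}\subset R[z]^{n}$ of constant degree $\delta$ and recall that each restriction $\mathcal{C}_{j}\simeq\mathfrak{C}\otimes_{R}R_{j}$ is again a $(n,k)$ convolutional code over $R_{j}$ with the same constant degree $\delta$; this is exactly what lets the component rings inherit the relevant data. Since every $R_{j}$ is a ring with representations, Proposition \ref{existencia2} applies verbatim and produces a minimal first order representation $(K,L,M)$ of $\mathfrak{C}$, obtained by gluing the component representations $(K_{j},L_{j},M_{j})$ through the structural idempotents $e_{1},\dots,e_{t}$ (with minimality conditions $(1)$ and $(2)$ coming from Lemma \ref{properties} and condition $(3)$ being checked componentwise). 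This settles existence. For uniqueness, Proposition \ref{unicidad2} applies: its extra hypothesis — that the minimal first order representations of each $\mathcal{C}_{j}$ are unique up to equivalence over $R_{j}$ — is just the uniqueness half of ``$R_{j}$ is a ring with representations''. Hence any two minimal first order representations of $\mathfrak{C}$ are related by invertible matrices $S,T$ over $R$, glued from the component matrices $S_{j},T_{j}$ via the identification $Gl_{n}(R)\simeq\prod_{j}Gl_{n}(R_{j})$ of Lemma \ref{properties}(2). Combining the two propositions, every family of convolutional codes over $R$ has a minimal first order representation that is unique up to equivalence, i.e. $R$ is a ring with representations.

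There is essentially no genuine obstacle here: the real work was already done in Lemma \ref{properties}, Corollary \ref{restriction}, and Propositions \ref{existencia2}--\ref{unicidad2}, and the final argument is just bookkeeping. The one point worth stating carefully is that ``first order representation'' in the hypothesis and conclusion of Proposition \ref{unicidad2} must be read as ``\emph{minimal} first order representation'' (as its proof tacitly does, since it invokes the stability of minimality conditions $(1)$, $(2)$, $(3)$ under base change), so that the equivalence relation transported up from the factors $R_{j}$ is precisely the one occurring in the definition of a ring with representations.
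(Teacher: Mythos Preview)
Your proposal is correct and follows exactly the same approach as the paper, which simply states that the theorem is clear from Propositions \ref{existencia2} and \ref{unicidad2}. Your version is merely a fuller unpacking of that one-line argument, together with the useful clarification that ``first order representation'' in Proposition \ref{unicidad2} should be read as \emph{minimal} first order representation.
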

\begin{proof}
It is clear from Propositions \ref{existencia2} and \ref{unicidad2}.
\end{proof}
Recall that the only rings with representations known so far are finite fields. Thus Theorem \ref{pupuri} implies that finite products of finite fields (reduced noetherian rings with Krull dimension $0$) are also rings with representations. 

\section{Input/Space/Output Representations of Families of Convolutional Codes over Finite Products of Finite Fields}

In this section, we specialize to the case $R_{i}=\mathbb{F}_i$ is a field for each $i=1,\hdots,t$. Let us show that we can generalize the construction of I/S/O representations given in (\ref{iso}) to families of convolutional codes over finite product of finite fields, $R=\mathbb{F}_{1}\times\hdots\times\mathbb{F}_{t}$.

\begin{theorem} \label{vamosquenosvamos}
 Let $\mathfrak{C}$ be a convolutional code over $R$. Let $(K,L,M)$ be a first order representation of $\mathfrak{C}$. Then
\begin{enumerate}
\item[i)] We can make elementary transformations over $(K,L,M)$ and obtain $(\mathcal{K}, \mathcal{L},\mathcal{M})$ such that 
\begin{equation}
\label{isoring}
\mathcal{K}=\left(\begin{array}{cc}
-Id_{\delta}\\
O \\
\end{array}
\right), \mathcal{L}=\left(\begin{array}{cc}
A \\
C \\
\end{array}
\right) \textit{ and }  \mathcal{M}=\left(\begin{array}{cc}
O & B  \\
-Id_{(n-k)} & D\\
\end{array}
\right)
\end{equation}
where $A \in \mathcal{M}_{\delta \times \delta}(R)$, $B \in \mathcal{M}_{\delta \times k}(R)$, $C \in \mathcal{M}_{(n-k) \times \delta}(R)$ and $D \in \mathcal{M}_{(n-k) \times k}(R)$.
\item[ii)] Moreover, the triple of matrices obtained in $i)$ verifies that
\begin{center}
$Ker(zK+L\mid M) \simeq Ker (z\mathcal{K}+\mathcal{L} \mid \mathcal{M})$
\end{center}
\end{enumerate} 
\end{theorem}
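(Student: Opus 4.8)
The plan is to reduce the statement to the classical field case component by component along $R\simeq\mathbb{F}_1\times\cdots\times\mathbb{F}_t$, and then to glue the normal forms obtained on each factor. Here I read $(K,L,M)$ as a \emph{minimal} first order representation (only the rank conditions (1) and (2) are actually used, but some such hypothesis is indispensable — e.g.\ $K$ has to be row-reducible to the matrix $\mathcal{K}$ of (\ref{isoring}) — so the statement is really the one for minimal representations, exactly as around (\ref{iso})). Let $e_1,\dots,e_t$ be the structural idempotents and $I_j$ the ideal of all components but the $j$th. By Lemma~\ref{properties}, $(K,L,M)$ corresponds to the tuple $(K_j,L_j,M_j)_j$ with $(K_j,L_j,M_j)=(e_jK,e_jL,e_jM)=(K,L,M)\bmod I_j$ over $\mathbb{F}_j$, and by Corollary~\ref{restriction} (as in the proof of Proposition~\ref{existencia2}) the restriction $\mathcal{C}_j\simeq\mathfrak{C}\otimes_R\mathbb{F}_j$ equals $Ker(zK_j+L_j\mid M_j)$; so each $(K_j,L_j,M_j)$ is a first order representation of the $(n,k)$ code $\mathcal{C}_j$ of degree $\delta$, minimal since the defining rank conditions are stable under base change.

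On each factor I would then run the classical reduction producing (\ref{iso}) (cf.\ \cite{York,RSY}): for every $j$ there are an invertible $W_j\in Gl_{\delta+n-k}(\mathbb{F}_j)$ and a permutation matrix $\Pi_j\in Gl_n(\mathbb{F}_j)$ of the codeword coordinates such that
\[
W_j\bigl(K_j,\ L_j,\ M_j\Pi_j\bigr)=(\mathcal{K}_j,\mathcal{L}_j,\mathcal{M}_j),
\]
with $\mathcal{K}_j,\mathcal{L}_j,\mathcal{M}_j$ of the block shapes of $\mathcal{K},\mathcal{L},\mathcal{M}$ in (\ref{isoring}) over $\mathbb{F}_j$ and blocks $A_j\in Mat_{\delta\times\delta}(\mathbb{F}_j)$, $B_j\in Mat_{\delta\times k}(\mathbb{F}_j)$, $C_j\in Mat_{(n-k)\times\delta}(\mathbb{F}_j)$, $D_j\in Mat_{(n-k)\times k}(\mathbb{F}_j)$. (Briefly: injectivity of $K_j$ lets one row-reduce it to $\mathcal{K}$; condition (2) forces the lower $(n-k)\times n$ block of the transformed $M_j$ to have full row rank, so a coordinate permutation and row operations on the bottom rows yield $(-Id_{n-k}\mid D_j)$ there; adding multiples of those bottom rows to the top $\delta$ rows then clears the top-left $\delta\times(n-k)$ block without disturbing $\mathcal{K}$.)

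To glue, I would use Lemma~\ref{properties}(2): the tuples $(W_j)_j$ and $(\Pi_j)_j$ determine unique invertible matrices $W\in Gl_{\delta+n-k}(R)$ and $\Pi\in Gl_n(R)$ with $W\bmod I_j=W_j$, $\Pi\bmod I_j=\Pi_j$ — note $\Pi$ is merely invertible over $R$, not a permutation matrix, since the $\Pi_j$ may vary with $j$. Put $(\mathcal{K},\mathcal{L},\mathcal{M}):=W\bigl(K,\,L,\,M\Pi\bigr)$; its reduction $\bmod I_j$ is $(\mathcal{K}_j,\mathcal{L}_j,\mathcal{M}_j)$ for every $j$. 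Since the $\mathcal{K}_j$ are the \emph{same} matrix $\mathcal{K}$ of (\ref{isoring}) on all factors, and identity and zero matrices over the $\mathbb{F}_j$ glue to identity and zero matrices over $R$ (Lemma~\ref{properties}(1) and $\sum_je_j=1$), the matrices $\mathcal{K}$ and $\mathcal{M}$ have precisely the block shapes in (\ref{isoring}), with $A=\sum_je_jA_j$, $B=\sum_je_jB_j$, $C=\sum_je_jC_j$, $D=\sum_je_jD_j$ matrices over $R$ of the stated sizes. This settles i).

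For ii) I would observe that each move above preserves the kernel up to $R[z]$-isomorphism. Left multiplication by $W$, invertible over $R[z]$, sends the equation $(zK+L)x+M\Pi v'=0$ to $(z\mathcal{K}+\mathcal{L})x+\mathcal{M}v'=0$ with the very same solution pairs $(x,v')$; and right multiplication of $M$ by $\Pi\in Gl_n(R)\subset Gl_n(R[z])$ is the coordinate change $v=\Pi v'$ on $R[z]^n$, an automorphism. Hence $v'\in Ker(z\mathcal{K}+\mathcal{L}\mid\mathcal{M})$ iff $\Pi v'\in Ker(zK+L\mid M)=\mathfrak{C}$, i.e.\ $Ker(z\mathcal{K}+\mathcal{L}\mid\mathcal{M})=\Pi^{-1}\mathfrak{C}$, and multiplication by $\Pi$ is the required isomorphism $Ker(z\mathcal{K}+\mathcal{L}\mid\mathcal{M})\simeq Ker(zK+L\mid M)$; one could also glue the factorwise isomorphisms via Corollary~\ref{restriction}. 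The argument is essentially idempotent bookkeeping; the two delicate points are that the reorderings $\Pi_j$ genuinely differ between factors — so over $R$ one gets a true automorphism $\Pi$ rather than a permutation, which is exactly why ii) asserts only an isomorphism and not an equality of kernels — and that the field-level reduction needs the minimality rank conditions, so the theorem has to be read for a minimal first order representation.
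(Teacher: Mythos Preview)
Your proof is correct and follows essentially the same route as the paper: restrict $(K,L,M)$ to each factor $\mathbb{F}_j$, perform the classical reduction to the block form (\ref{iso}) there (using the minimality rank conditions, which you rightly flag as necessary), and glue the invertible transformations $W_j$ and coordinate reorderings back to $R$ via Lemma~\ref{properties}(2). For part ii) the paper simply invokes \cite[Proposition~2.5]{nuestroarxiv}, whereas you give the direct argument that left multiplication by the invertible $W$ and right multiplication by $\Pi\in Gl_n(R)$ are isomorphisms preserving the kernel-of-a-pair construction; this is a welcome unpacking of the cited result, and your observation that $\Pi$ need only be invertible over $R$ (not a permutation, since the $\Pi_j$ may differ) is exactly the reason the conclusion is an isomorphism rather than an equality.
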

\begin{proof}
 $i)$ Since $(K \mid L \mid M)$ is surjective, we know that $(K_j \mid L_j \mid M_j)$ has full rank (see Corollary \ref{fullrank}) and therefore there exists an invertible matrix $W$ of size $(\delta+n-k)$ (see Lemma \ref{lema}) such that if we multiply $W^{-1}$ and $(K \mid L \mid M)$ and we reorder the codewords of the local codes $\mathcal{C}_j$, if it is necessary, we get a triple of matrices $(\mathcal{K} \mid \mathcal{L} \mid \mathcal{M})$ in the following way

\begin{center}
$\mathcal{K}=\left(\begin{array}{cc}
-Id_{\delta}\\
O \\
\end{array}
\right), \mathcal{L}=\left(\begin{array}{cc}
A_{\delta \times \delta} \\
C_{(n-k) \times \delta} \\
\end{array}
\right)$ and $ \mathcal{M}=\left(\begin{array}{cc}
O & B _{\delta \times k} \\
-Id_{(n-k)} & D_{(n-k) \times k} \\
\end{array}
\right)$
\end{center}

  $ii)$ Follows from \cite[Proposition 2.5]{nuestroarxiv}.
\end{proof}

\begin{proposition}
  \label{noenoe}
 The matrices $(A,B,C,D)$ over $R$ obtained in (\ref{isoring}) can be constructed from  $(A_{j},B_{j},C_{j},D_{j})$ over $R_{j}$, the I/S/O representations of the convolutional codes  $\mathcal{C}_{j}$ over $\mathbb{F}_{j}$.
  \end{proposition}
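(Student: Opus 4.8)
The plan is to reduce the statement to the componentwise picture already established in Lemma \ref{lema} and Theorem \ref{vamosquenosvamos}, exploiting the fact that over $R=\mathbb{F}_1\times\cdots\times\mathbb{F}_t$ every matrix, every invertible matrix, and every construction splits as a product over the components. First I would take a first order representation $(K,L,M)$ of $\mathfrak{C}$ and its restrictions $(K_j,L_j,M_j)$ to $R_j$, which by Corollary \ref{restriction} and base-change stability of the minimality conditions are first order representations of $\mathcal{C}_j$. Applying the classical I/S/O construction of (\ref{iso}) to each $\mathcal{C}_j$ produces, for each $j$, an invertible matrix $W_j\in Gl_{\delta+n-k}(\mathbb{F}_j)$ (together with a permutation of the codewords of $\mathcal{C}_j$) carrying $(K_j,L_j,M_j)$ to the normal form $(\mathcal{K}_j,\mathcal{L}_j,\mathcal{M}_j)$ with blocks $(A_j,B_j,C_j,D_j)$ as in (\ref{isoring}).

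Next I would assemble these componentwise data into data over $R$. By Lemma \ref{lema}(2) the tuple $(W_j)_j$ corresponds to a unique invertible matrix $W=\sum_j e_jW_j\in Gl_{\delta+n-k}(R)$, and the componentwise codeword permutations patch (they must already act compatibly, or one enlarges to a common relabeling) to a single permutation of the $n$ coordinates of $R[z]^n$. Set $A=\sum_j e_jA_j$, $B=\sum_j e_jB_j$, $C=\sum_j e_jC_j$, $D=\sum_j e_jD_j$. Since the passage $A\mapsto (e_jA)_j$ is a ring isomorphism $Mat(R)\simeq\prod_j Mat(R_j)$ respecting products and the identity matrix (again Lemma \ref{lema}), the equality $W^{-1}(K,L,M)=(\mathcal{K},\mathcal{L},\mathcal{M})$ holds over $R$ because it holds componentwise, and the block shapes $-Id_\delta$, $O$, $-Id_{n-k}$ in (\ref{isoring}) are respected because idempotent combinations of identity blocks give identity blocks. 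Thus the I/S/O matrices $(A,B,C,D)$ of $\mathfrak{C}$ furnished by Theorem \ref{vamosquenosvamos}, up to this uniqueness of $W$, are exactly $\sum_j e_j(A_j,B_j,C_j,D_j)$, which is the assertion.

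The main obstacle I anticipate is not algebraic but bookkeeping: the classical construction in (\ref{iso}) involves \emph{reordering the codewords} before extracting the $-Id$ block, and a priori the permutations required on the different components $\mathbb{F}_j$ need not be the same. I would address this by noting that one is always free to permute the $n$ coordinates of $R[z]^n$ globally, and that the normal form is obtained after choosing, for each $j$, a set of $n-k$ coordinates on which the relevant submatrix of $(K_j\mid L_j\mid M_j)$ is a unit block; since these are finitely many choices one can, after a single global permutation, arrange that the resulting $W_j$ exist simultaneously, which is precisely the content of the phrase ``reorder the codewords of the local codes $\mathcal{C}_j$ if necessary'' in the proof of Theorem \ref{vamosquenosvamos}. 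Once a common ordering is fixed, the matching of $W=\sum_j e_jW_j$ and of the block decomposition is immediate from Lemma \ref{lema}, and no further computation is needed.
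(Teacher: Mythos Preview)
Your proposal is correct and uses the same underlying ingredient as the paper---the product decomposition of matrices and invertible matrices over $R$ given by Lemma \ref{lema}---but you run the argument in the opposite direction. The paper starts from the global invertible matrix $W$ already produced in Theorem \ref{vamosquenosvamos}, applies $\bmod\ I_j$ to the identity $W^{-1}(K\mid L\mid M)=(\mathcal{K}\mid\mathcal{L}\mid\mathcal{M})$, and reads off that the reductions $(A\bmod I_j,\ldots)$ coincide with the local I/S/O data $(A_j,B_j,C_j,D_j)$; you instead build the local $W_j$ first from the classical theory and then glue them via $W=\sum_j e_jW_j$ to obtain the global normal form. Both arguments are short and rest on exactly the same lemma; the paper's version is marginally quicker because it reuses the $W$ from Theorem \ref{vamosquenosvamos} rather than reconstructing it, while your version is more explicitly constructive (matching the wording ``can be constructed from'') and, to your credit, is more careful than the paper about the codeword-reordering step across components.
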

\begin{proof}
Let $W$ be the invertible matrix of size $(\delta+n-k)$ defined in  Proof of $i)$ of Theorem \ref{vamosquenosvamos}. Let $W_{j}\equiv W$ $(mod$ $I_{j})$ be the square minor of size $(\delta+n-k)$ such that $det(W_{j}) \in \mathbb{F}_{j}^{*}$. 

We know that 
\begin{equation}
\label{minors}
W^{-1}(K \mid L \mid M)= (\mathcal{K} \mid \mathcal{L} \mid \mathcal{M}) 
\end{equation}
where $(K \mid L \mid M)$ and $(\mathcal{K} \mid \mathcal{L} \mid \mathcal{M})$ are minimal first order representations of $\mathfrak{C}$.

If we apply $mod$ $I_{j}$ to both sides of above equation we get
\begin{equation}
\label{oeste}
[W^{-1}\cdot (K \mid L \mid M)](\textit{mod } I_{j})=(\mathcal{K}_{j} \mid \mathcal{L}_{j} \mid \mathcal{M}_{j})=\left[\begin{array}{cc}
-Id_{j}\\
O \\
\end{array}
  \begin{array}{cc}
A_{j} \\
C_{j} \\
\end{array} \begin{array}{cc}
O & B_{j} \\
-Id_{j} & D_{j} \\
\end{array}\right]
\end{equation}
On the other hand 
\begin{equation}
\label{este}
(\mathcal{K} \mid \mathcal{L} \mid \mathcal{M})(\textit{mod } I_{j})= \left[\begin{array}{cc}
-Id\\
O \\
\end{array}
  \begin{array}{cc}
A \\
C\\
\end{array} \begin{array}{cc}
O & B\\
-Id & D \\
\end{array}\right](\textit{mod } I_{j})
\end{equation}
and since the equation (\ref{minors}) is verified then we conclude the proof.
\end{proof}

Recall that a a linear system $\Sigma=(A,B,C,D)$ over a commutative ring $R$ is reachable (controllable from the origin) if the controllability matrix $\Phi_{\delta}(A,B):= (B, AB, . . . , A^{\delta-1}B)$ is surjective. Let as show that the I/S/O representation constructed above is always reachable.

\begin{proposition}
\label{hastalos}
Let $R= \mathbb{F}_{1} \times \ldots \times \mathbb{F}_{t}$ be a commutative ring where $\mathbb{F}_{j}$ is a finite field for all $j=1,\ldots,t$ and let $\Sigma^{\mathfrak{C}}$ be the I/S/O representation of a family of convolutional codes $\mathfrak{C}$ over $R$. Then $\Sigma^{\mathfrak{C}}$ is a reachable linear system over $R$.
\end{proposition}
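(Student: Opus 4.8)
The plan is to reduce the reachability of $\Sigma^{\mathfrak{C}}$ over $R$ to the (classical) reachability of the I/S/O representations $\Sigma^{\mathcal{C}_j}=(A_j,B_j,C_j,D_j)$ over the finite fields $\mathbb{F}_j$, exploiting that $R$ is a finite product of fields and that everything in sight is compatible with the product decomposition of Lemma \ref{lema}(2).

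First I would record the block description of $A$ and $B$. By Proposition \ref{noenoe} together with Lemma \ref{lema}(1), the matrices of $\Sigma^{\mathfrak{C}}$ satisfy $A=\sum_{j=1}^{t}e_jA_j$ and $B=\sum_{j=1}^{t}e_jB_j$, where $e_1,\ldots,e_t$ are the structural idempotents and $(A_j,B_j,C_j,D_j)$ is precisely the I/S/O representation of $\mathcal{C}_j$ over $\mathbb{F}_j$; here the standing hypothesis that the degree function $\delta$ is constant guarantees that all the $A_j$ (resp. $B_j$) have the same size $\delta\times\delta$ (resp. $\delta\times k$), so the sums make sense. Since the idempotents are orthogonal ($e_ie_j=0$ for $i\neq j$ and $e_j^{2}=e_j$), one gets $A^{m}=\sum_{j=1}^{t}e_jA_j^{m}$ for all $m\geq0$, hence $A^{m}B=\sum_{j=1}^{t}e_jA_j^{m}B_j$, and therefore
\begin{equation*}
\Phi_{\delta}(A,B)=(B,AB,\ldots,A^{\delta-1}B)=\sum_{j=1}^{t}e_j\,\Phi_{\delta}(A_j,B_j).
\end{equation*}
In other words, under the isomorphism $Mat_{\delta\times k\delta}(R)\simeq\prod_{j=1}^{t}Mat_{\delta\times k\delta}(\mathbb{F}_j)$ of Lemma \ref{lema}(2), the controllability matrix $\Phi_{\delta}(A,B)$ corresponds to the tuple $(\Phi_{\delta}(A_j,B_j))_{j=1,\ldots,t}$.

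It then remains to check surjectivity. Because $R^{a}\simeq\prod_{j}\mathbb{F}_j^{a}$ compatibly with the decomposition above (equivalently, by Lemma \ref{fullrank}, since $\Spec(R)=\{I_1,\ldots,I_t\}$ with residue fields $\mathbb{F}_j$ and $\Phi_{\delta}(A,B)(I_j)=\Phi_{\delta}(A_j,B_j)$, and $\delta\leq k\delta$), the $R$-linear map $\Phi_{\delta}(A,B)$ is surjective if and only if each $\mathbb{F}_j$-linear map $\Phi_{\delta}(A_j,B_j)$ is surjective, i.e. if and only if every $\Sigma^{\mathcal{C}_j}$ is reachable over $\mathbb{F}_j$. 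The latter is exactly the classical fact recalled in the remark following (\ref{dinamic2}): the I/S/O representation attached to a minimal first order representation of a convolutional code over a field is always reachable (see \cite{RSY,BCH}). Hence each $\Phi_{\delta}(A_j,B_j)$ is surjective, so $\Phi_{\delta}(A,B)$ is surjective, and $\Sigma^{\mathfrak{C}}$ is a reachable linear system over $R$.

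I do not expect a genuine obstacle here: the only point that needs a little care is to make sure that the $A_j,B_j$ occurring in the decomposition are literally the I/S/O matrices of $\mathcal{C}_j$ (including the harmless reordering of codewords in each local code), which is the content of Proposition \ref{noenoe}; once that is in place, the orthogonality of the idempotents turns the passage to powers of $A$ and to $\Phi_{\delta}(A,B)$ into a formal computation, and the reduction to the field case is immediate.
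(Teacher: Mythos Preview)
Your proposal is correct and follows essentially the same approach as the paper: reduce to the components $(A_j,B_j)$ via Proposition~\ref{noenoe}, invoke the classical reachability of I/S/O representations over each field $\mathbb{F}_j$, and then lift surjectivity of $\Phi_{\delta}(A,B)$ from the components using the product decomposition (the paper phrases this last step as ``$R$ is a pointwise ring'' and cites Lemma~\ref{lema}, while you spell out the idempotent computation and appeal to Lemma~\ref{fullrank}). Your version is more explicit about how $\Phi_{\delta}(A,B)$ decomposes under the idempotents, but the logical structure is identical.
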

\begin{proof}
Let $(A,B,C,D)$ be an I/S/O representation of a $\mathfrak{C}$ over $R$. By Proposition \ref{noenoe} we can consider these matrices such that
\begin{equation*}
A_{j} \equiv A (\textit{mod } I_{j}), B_{j} \equiv B (\textit{mod } I_{j}), C_{j} \equiv C (\textit{mod } I_{j}) \textit{ and } D_{j} \equiv D (\textit{mod } I_{j}).
\end{equation*}
Now, by \cite{York}, each I/S/O representation of each convolutional code over $\mathbb{F}_{j}$, $\Sigma_{j}^{\mathcal{C}_{j}}$, verifies that $rank$ $\Phi_{\delta}(A_{j},B_{j})=\delta$ and so, they are reachable linear systems. Since $R$ is a pointwise ring, $\Phi_{\delta}(A,B)$ is surjective (see Lemma \ref{lema}) and we conclude that the I/S/O of $\mathfrak{C}$, $\Sigma^{\mathfrak{C}}$, is a reachable linear system over $R$.
\end{proof}

\begin{example}
\label{ejuno}
In this example, we will raise two encoders with coefficients in $\mathbb{Z}/2\mathbb{Z}$ and $\mathbb{Z}/3\mathbb{Z}$, each one generating a dynamical linear system. We compute their first order and I/S/O representations. Consider the encoder on $\mathbb{Z}/2\mathbb{Z}$
\begin{center}
$G_{1}(z)=\left(\begin{array}{cc}
z-1 & 1 \\
z^{2}+1 & 0 \\
z^{2}+1 & z+1
\end{array}
\right)$
\end{center}
Then there exist matrices $K_{1}, L_{1}$ and $M _{1}$ that characterize the encoder $G_{1}(z)$. 
\begin{equation*}
K_{1}=\left(\begin{array}{ccc}
1 & 0 & 0 \\
1 & 0 & 0 \\
0 & 1 & 0 \\
0 & 1 & 1 \\
\end{array}
\right), L_{1}=\left(\begin{array}{ccc}
0 & 1 & 0 \\
1 & 0 & 1 \\
1 & 0 & 0 \\
1 & 0 & 1 \\
\end{array}
\right)
\textit{ and } 
M_{1}=\left(\begin{array}{ccc}
0 & 0 & 0 \\
1 & 0 & 0 \\
0 & 1 & 0 \\
0 & 0 & 1 \\
\end{array}
\right)
\end{equation*}
Therefore we compute the matrices $A_{1}, B_{1}, C_{1}$ and $D_{1} $, by which one gets the associated linear system.
\begin{equation*}
A_{1}=\left(\begin{array}{ccc}
0 & 1 & 0 \\
1 & 0 & 0 \\
0 & 0 & 1 
\end{array}
\right), 
B_{1}=\left(\begin{array}{cc}
0 & 0 \\
1 & 0 \\
1 & 1
\end{array}
\right),
C_{1}=\left(\begin{array}{ccc}
1 & 1 & 1 
\end{array}
\right) \textit{ and } D_{1}=\left(\begin{array}{cc}
0 & 0  
\end{array}
\right)
\end{equation*}
Now note that
$$rk\left(\begin{array}{ccc} B_{1} & A_{1}B_{1} & A_{1}^{2} B_{1} \\ \end{array}\right)= rk \left(\begin{array}{cccccc} 0 & 0 &1&0&0&0 \\ 1&0&0&0&1&0 \\ 1&1&1&1&1&1 \\  \end{array} \right)= 3$$
so, $(A_1,B_1,C_1,D_1)$ is reachable. \\
Consider the encoder on $\mathbb{Z}/3\mathbb{Z}$
\begin{center}
$G_{2}(z)=\left(\begin{array}{cc}
z & -1 \\
1 & z-1 \\
-z^{2}+z-1 & 0
\end{array}
\right)$
\end{center}
Matrices $K_{2}, L_{2}$ and $M_{2}$ are obtained following:
\begin{equation*}
K_{2}=\left(\begin{array}{ccc}
-1 & 0 & 0 \\
-1 & 0 & 0 \\
0 & 0 & -1 \\
-1 & 1 & 0 \\
\end{array}
\right),
 L_{2}=\left(\begin{array}{ccc}
0 & 1 & 0 \\
0 & 0 & 1 \\
-1 & 0 & 1 \\
1 & 0 & 0 \\
\end{array}
\right) \textit{ and } 
M_{2}=\left(\begin{array}{ccc}
0 & 0 & 0 \\
1 & 0 & 0 \\
0 & 1 & 0 \\
0 & 0 & 1 \\
\end{array}
\right)
\end{equation*}
Then
\begin{equation*}
A_{2}=\left(\begin{array}{ccc}
0 & 1 & 0 \\
-1 & 1 & 0 \\
-1 & 0 & 1 
\end{array}
\right), 
B_{2}=\left(\begin{array}{cc}
0 & 0 \\
0 & -1 \\
1 & 0
\end{array}
\right),
C_{2}=\left(\begin{array}{ccc}
0 & 1 & -1 
\end{array}
\right), \textit{ and }
D_{2}=\left(\begin{array}{cc}
0 & 0  
\end{array}
\right)
\end{equation*}
Note again that
$$rk\left(\begin{array}{ccc} B_{2} & A_{2}B_{2} & A_{2}^{2} B_{2} \\ \end{array}\right)= rk \left(\begin{array}{cccccc} 0 & 0 & 0& 2&0&2   \\ 0&2&0&2&0&0 \\ 1&0&1&0&1&1 \\  \end{array} \right)= 3$$ 
so $(A_2,B_2,C_2,D_2)$ is also reachable. \\
Now, we obtain the corresponding matrices $(A,B,C,D)$ over $\mathbb{Z}/6\mathbb{Z}$ \emph{glueing} the matrices $(A_{1},B_{1},C_{1},D_{1})$ and $(A_{2},B_{2},C_{2},D_{2})$
\begin{equation*}
A=\left(\begin{array}{ccc}
0 & 1 & 0 \\
5 & 4 & 0 \\
2 & 0 & 1 
\end{array}
\right), 
B=\left(\begin{array}{cc}
0 & 0 \\
3 & 2 \\
1 & 3
\end{array}
\right),
C=\left(\begin{array}{ccc}
3 & 1 & 5 
\end{array}
\right), and D=\left(\begin{array}{cc}
0 & 0  
\end{array}
\right)
\end{equation*}
The associated I/S/O representation of $\mathfrak{C}$ over $\mathbb{Z}/6\mathbb{Z}$ is reachable too, since 
$$\mathcal{U}_{3}(\Phi_{3}(A,B))=<\left|\begin{array}{ccc} 3 & 2 & 2 \\ 0 & 2 & 0 \\ 1 & 3 & 1 \\ \end{array}\right|,\left|\begin{array}{ccc} 3 & 0 & 2 \\ 0 & 3 & 0 \\ 1 & 1 & 1 \\ \end{array}\right|>=<2,3>=\mathbb{Z}/6\mathbb{Z}$$
We also perform matrices $K, L$ and $M$ in $ \mathbb{Z}/6\mathbb{Z}$ 
\begin{equation}
\label{KLM6}
K=\left(\begin{array}{ccc}
-1 & 0 & 0 \\
0 & -1 & 0 \\
0 & 0 & -1 \\
0 & 0 & 0 \\
\end{array}
\right), L=\left(\begin{array}{ccc}
0 & 1 & 0 \\
5 & 4 & 0 \\
2 & 0 & 1 \\
3 & 1 & 5 \\
\end{array}
\right)
\textit{ and } 
M=\left(\begin{array}{ccc}
0 & 0 & 0 \\
0 & 3 & 2 \\
0 & 1 & 3 \\
-1 & 0 & 0 \\
\end{array}
\right)
\end{equation}
Now we compute $Ker(zK+L|M)$ in order to obtain a encoder of a family of convolutional codes over 
$\mathbb{Z}/6\mathbb{Z}[z]$,
\begin{equation}
\label{encoder6}
G(z)=\left(\begin{array}{cc}
z+3 & 5 \\
3z^{2}+1 & -2z+2 \\
-z^{2}+4z-1 & 3z-3
\end{array}
\right)
\end{equation}
 Note that above encoder $G(z)$ restricts to $\mathbb{Z}/2\mathbb{Z}$ obtaining $G_{1}(z)$ and $\mathbb{Z}/3\mathbb{Z}$ getting $G_{2}(z)$.
\end{example}

\section{Construction of Observable Families of Convolutional Codes over Finite Product of Finite Fields.}
Let $\mathbb{F}_{1},\hdots,\mathbb{F}_{t}$ be finite fields and consider the ring $R=\prod\mathbb{F}_{i}$ which is a ring with representations, as we have already shown. It is well known that if we consider a reachable and observable I/S/O representations over a finite field then we get an observable convolutional code by minimal first order representation by the formula (\ref{iso}) (see \cite{RSY} and \cite{York}  for details). In this section we show that we can generalize this result to the case of I/S/O representations of a family of convolutional codes over $R$.

Recall that a linear system $\Sigma=(A,B,C,D)$ over a commutative ring $R$ is observable if the observability matrix $\Omega_{\delta}(A,C):=(C^{t}, (CA)^{t}, (CA^{2})^{t}, \ldots, (CA^{\delta-1})^{t})$ is injective.
Now we give the result that allow us to construct observable families of convolutional codes from observable I/S/O representations over $R$.
\begin{proposition}
Let $R = \mathbb{F}_{1} \times \ldots \times \mathbb{F}_{t}$ be a commutative ring where $\mathbb{F}_{j}$ is a finite field for all $j=1,\ldots,t$ and let $\Sigma^{\mathfrak{C}}$ be the I/S/O representation of a family of convolutional codes $\mathfrak{C}$ over $R$. If $\Sigma^{\mathfrak{C}}$ is a reachable and observable linear system over $R$ then $\mathfrak{C}$ is an observable family of convolutional codes over $R$ 
\end{proposition}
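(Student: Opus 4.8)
The plan is to descend to the classical situation over each finite field $\mathbb{F}_j$ and then reassemble the conclusion with Proposition \ref{observableiff}. By hypothesis $\Sigma^{\mathfrak{C}}=(A,B,C,D)$ is the I/S/O representation attached to $\mathfrak{C}$, so Proposition \ref{noenoe} lets me write $A_j\equiv A\pmod{I_j}$ and similarly for $B_j,C_j,D_j$, where $\Sigma_j:=(A_j,B_j,C_j,D_j)$ is the I/S/O representation of the member code $\mathcal{C}_j$ over $\mathbb{F}_j$. Since reduction modulo $I_j$ is a ring homomorphism, it commutes with forming products, powers and transposes of matrices, so $\Phi_\delta(A_j,B_j)\equiv\Phi_\delta(A,B)\pmod{I_j}$ and $\Omega_\delta(A_j,C_j)\equiv\Omega_\delta(A,C)\pmod{I_j}$; equivalently, these are the $\mathbb{F}_j$-components of $\Phi_\delta(A,B)$ and $\Omega_\delta(A,C)$ under the decomposition of Lemma \ref{lema}(2).

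The first genuine step is to transfer reachability and observability from $\Sigma^{\mathfrak{C}}$ to each factor. Reachability of $\Sigma^{\mathfrak{C}}$ means $\Phi_\delta(A,B)$ is surjective over $R$; by Lemma \ref{lema}(2) a matrix over $R\simeq\prod_j\mathbb{F}_j$ is surjective precisely when each of its components is, so every $\Phi_\delta(A_j,B_j)$ is surjective and each $\Sigma_j$ is reachable. For observability the same bookkeeping applies with injectivity in place of surjectivity: over the product ring $R$ every module is canonically the product of its components and every $R$-linear map is the product of the induced $\mathbb{F}_j$-linear maps, so $\Omega_\delta(A,C)$ is injective over $R$ if and only if each $\Omega_\delta(A_j,C_j)$ is injective over $\mathbb{F}_j$. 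Hence each $\Sigma_j$ is a reachable and observable linear system over the finite field $\mathbb{F}_j$.

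Next I would invoke the classical theorem over a field (see \cite{RSY}, \cite{York}, and the remark following (\ref{dinamic2})): a reachable and observable I/S/O representation over a finite field produces, through the reduction (\ref{isoring}), an observable convolutional code. Applying this for each $j$ shows that $\mathcal{C}_j$ is an observable $(n,k)$ convolutional code over $\mathbb{F}_j$. Finally Proposition \ref{observableiff} upgrades the statement ``$\mathcal{C}_j$ observable for all $j$'' to ``$\mathfrak{C}$ observable over $R$'', which is exactly the desired conclusion.

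I expect the only point that is not a direct citation to be the observability transfer in the second paragraph: Lemma \ref{lema} is stated so as to give the surjectivity half at once, whereas the dual statement --- injectivity of $\Omega_\delta(A,C)$ over a finite product of fields is equivalent to injectivity of each $\Omega_\delta(A_j,C_j)$ --- has to be read off from the fact that a homomorphism between finite products of modules is a monomorphism exactly when each of its components is. With that remark in hand, the remaining ingredients (Proposition \ref{noenoe}, the classical reachable-plus-observable implies observable result over a field, and Proposition \ref{observableiff}) are already available and the proof is immediate.
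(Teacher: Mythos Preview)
Your proof is correct and follows the same overall strategy as the paper: restrict to each $\mathbb{F}_j$, show the component systems are reachable and observable, apply the classical field result to get each $\mathcal{C}_j$ observable, and reassemble via Proposition~\ref{observableiff}. The one technical difference is in the observability transfer: you argue directly from the product decomposition of modules that injectivity of $\Omega_\delta(A,C)$ over $R=\prod_j\mathbb{F}_j$ is equivalent to injectivity of each component $\Omega_\delta(A_j,C_j)$, whereas the paper instead notes that $R$ is absolutely flat, so $\coker\Omega_\delta(A,C)$ is $R$-flat and hence injectivity is preserved under the base change $R\to\mathbb{F}_j$. Both justifications are valid and equally short; yours is the more elementary one and avoids invoking flatness, while the paper's has the advantage of fitting the flatness language used elsewhere in the article.
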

\begin{proof}
By hypothesis $(A,B,C,D)$ is a reachable and observable linear system over $R$, so $\Omega_{\delta}(A,C)$ is injective and $\Phi_{\delta}(A,B)$ is surjective. Consider $(A_{j},B_{j},C_{j},D_{j})$, the linear systems obtained over each $\mathbb{F}_{j}$ for $j=1,\ldots,t$. Clearly $\Phi_{\delta}(A_{j},B_{j})$ is also surjective for all $j$. Since $R$ is an absolutely flat ring, the cokernel of $\Omega_{\delta}(A,C)$ is flat over $R$. Therefore $\Omega_{\delta}(A_{j},C_{j})$ is also injective for all $j$. Thus, for all $j$ the above systems are reachable and observable too. Then, if we perform the convolutional codes $\mathcal{C}_{j}$ for each $j$ from $(A_{j},B_{j},C_{j},D_{j})$, by \cite{York} $\mathcal{C}_{j}$ are observable convolutional codes for all $j$. By Lemma \ref{observableiff} then $\mathfrak{C}$ is an observable family of convolutional codes over $R$.
\end{proof}

\section{Conclusions}

We have proved the existence and uniqueness of minimal first order representations of families of convolutional codes over certain commutative rings. This property defines the class of rings that we have called rings with representations. For instance, we have shown that finite products of finite fields belong to this class. Here a natural question comes up: are infinite products of finite fields $\prod \mathbb{F}_{i}$ rings with representations? Show that these are special cases of von Neumann regular rings. So the next natural question is: are von Neumann regular rings rings with representations? 

In the particular case of finite product of finite fields, we also generalize the existence of I/S/O representations and we construct observable families of convolutional codes from linear systems.

Our further research is focused on answering the above questions and to get I/S/O representations for this type of rings.


\end{document}